\newenvironment{proof}[1][Proof]{\textbf{#1.} }{\
\rule{0.5em}{0.5em}}
\newcommand{\bN}{\mathbf{N}}
\newcommand{\bK}{\mathbf{K}}
\newcommand{\ord}{\mbox{\rm ord}}
\newcommand{\ini}{\mbox{\rm in}}
\newcommand{\ch}{\mathrm{char}\,}
\newcommand{\mo}{\mathrm{mod}\,}
\newtheorem{defi}{Definition}[section]
\newtheorem{nota}[defi]{Remark}
\newtheorem{ejemplo}[defi]{Example}
\newtheorem{teorema}[defi]{Theorem}
\newtheorem{prop}[defi]{Proposition}
\newtheorem{lema}[defi]{Lemma}
\newtheorem{coro}[defi]{Corollary}
\begin{document}
\title{On the Milnor formula in arbitrary characteristic
\footnotetext{
     \noindent   \begin{minipage}[t]{4in}
       {\small
       2010 {\it Mathematics Subject Classification:\/} Primary 14H20;
       Secondary 32S05.\\
       Key words and phrases: plane singularity, Milnor number, degree of conductor, factorization of polar curve.\\
       The first-named author was partially supported by the Spanish Project
       MTM 2016-80659-P.}
       \end{minipage}}}

\author{Evelia R.\ Garc\'{\i}a Barroso and Arkadiusz P\l oski}

\maketitle
\hfill{Dedicated to Antonio Campillo on the occasion of his 65th birthday}
\vspace{1cm}

\begin{abstract} The Milnor formula $\mu=2\delta-r+1$ relates the Milnor number $\mu$, the double point number $\delta$ and the number $r$ of branches of a plane curve singularity. It holds over the fields of characteristic zero. Melle and Wall based on a result by Deligne proved the inequality $\mu\geq 2\delta-r+1$ in arbitrary characteristic and showed that the equality $\mu=2\delta-r+1$ characterizes the singularities with no wild vanishing cycles.  In this note we give an account of results on the Milnor formula in characteristic $p$. It holds if the plane singularity is Newton non-degenerate (Boubakri et al. Rev. Mat. Complut. (2010) 25) or if $p$ is greater than the intersection number of the singularity with its generic polar (Nguyen H.D., Annales de l'Institut Fourier, Tome 66 (5) (2016)).  Then we improve our result on the Milnor number of irreducible singularities (Bull. London Math. Soc. 48 (2016)). Our considerations are based on the properties of polars of plane singularities in characteristic $p$.
\end{abstract}

\section*{Introduction}
\label{intro}

\noindent John Milnor proved in his celebrated book \cite{Milnor} the formula 

\begin{equation}
\label{Milnor-formula}
\mu=2\delta-r+1, \tag{M}
\end{equation}

\noindent where $\mu$ is the  Milnor number $\mu$, $\delta$ the double point number  and $r$ the number  of branches of a plane curve singularity.
The Milnor's proof of  (\ref{Milnor-formula}) is based on topological considerations. A proof given by Risler \cite{Risler} is algebraic and shows that (\ref{Milnor-formula}) holds in characteristic zero.

\medskip

\noindent On the other hand Melle and Wall based on a resultd by Deligne \cite{Deligne} proved the inequality 
 $\mu\geq 2\delta-r+1$ in arbitrary characteristic and showed that the Milnor formula holds if and only if the  singularity has not wild {\em vanishing cycles} \cite{Melle}. In the sequel we  will call a {\em tame singularity} any plane curve singularity  verifying  (\ref{Milnor-formula}).

\medskip

\noindent Recently some papers on the singularities satisfying 
(\ref{Milnor-formula}) in characteristic $p$ appeared. In \cite{BGreuel}) the authors showed that 
planar Newton non-degenerate singularities are tame. Different notions of non-degeneracy for plane curve singularities are discussed in
\cite{G-N}. In \cite{Nguyen} the author proved that if the characteristic $p$  is greater than the kappa invariant then the singularity is tame. In \cite{GB-P2016} and \cite{Hefez} the case of irreducible singularities is investigated. Our aim is to give an account of the above-mentioned results. 

\medskip

\noindent In Section 1 we prove that any semi-quasihomogeneous singularity is tame. Our proof is different from that given in \cite{BGreuel} and can be extended to the case of Kouchnirenko nondegenerate singularities (\cite[Theorem 9]{BGreuel}). In Section 2 and 3 we generalize Teissier's lemma (\cite[Chap. II, Proposition 1.2]{Teissier}) relating the intersection number of the singularity with its polar and the Minor number to the case of arbitrary characteristic and reprove the result due to H.D. Nguyen \cite[Corollary 3.2]{Nguyen} in the following form: if $p>\mu(f)+\ord(f)-1$ then the  singularity is tame.

\medskip

\noindent Section 4 is devoted to the strengthened version of our result on the Milnor number of irreducible singularities.

\section{Semi-quasihomogeneous singularities}

\noindent  Let $\bK$ be an algebraically closed field of characteristic $p\geq 0$. For any formal power series $f\in \bK[[x,y]]$ we denote by $\ord(f)$ (resp. $\ini(f)$) the \emph {order} (resp. the \emph {initial form} of $f$). A power series $l\in \bK[[x,y]]$ is called a \emph{regular parameter} if $\ord(l)=1$. A  \emph{plane curve singularity}  (in short: {\em a singularity}) is a nonzero  power series $f$ of order greater than one. For any power series  $f,g\in \bK[[x,y]]$ we put $i_0(f,g):=\dim_{\bK} \bK[[x,y]]/(f,g)$ and called it the \emph{intersection number} of $f$ and $g$. The \emph{Milnor number} of $f$ is 
\[
\mu(f):=\dim _{\bK} \bK[[x,y]]/\left(\frac{\partial f}{\partial x}, \frac{\partial f}{\partial y}\right).
\] 

\noindent If $\Phi$ is an automorphism of $\bK[[x,y]]$ then $\mu(f)=\mu(\Phi(f))$ (see \cite[p. 62]{BGreuel}). If the characteristic of $\bK$ is $p=\ch \bK>0$ then we can have $\mu(f)=+\infty$ and $\mu(uf)<+\infty$ for a unit $u\in \bK[[x,y]]$ (take $f=x^p+y^{p-1}$ and $u=1+x$).

\medskip

\noindent  Let $f\in \bK[[x,y]]$ be a reduced (without multiple factors)  power series and
consider a regular parameter $l\in \bK[[x,y]]$. Assume that $l$ does not divide $f$. We call the \emph{polar  of $f$ with respect to $l$}  the power series

\[
{\cal P}_l(f)=\frac{\partial (f,l)}{\partial (x,y)}=\frac{\partial f}{\partial x}\frac{\partial l}{\partial y}-\frac{\partial f}{\partial y}\frac{\partial l}{\partial x}.
\]

\noindent If $l=-bx+ay$ for  $(a,b)\neq (0,0)$ then ${\cal P}_l(f)=a\frac{\partial f}{\partial x}+b\frac{\partial f}{\partial y}$.

\medskip

\noindent For any reduced power series $f$ we put  ${\cal O}_f=\bK[[x,y]]/(f)$, $ \overline{{\cal O}_f}$  the integral closure of  ${\cal O}_f$ in the total quotient ring of ${\cal O}_f$ and $\delta(f)=\dim_{\bK} \overline{{\cal O}_f}/{\cal O}_f$ (the double point number). Let ${\cal C}$ be the \emph{conductor} of ${\cal O}_f$, that is the largest ideal in ${\cal O}_f$  which remains an ideal in $ \overline{{\cal O}_f}$. We define $c(f)=\dim_{\bK} \overline{\cal O}_f/{\cal C}$ (the {\em degree of conductor}) and $r(f)$  the number of irreducible factors of $f$. The {\em semigroup} $\Gamma(f)$ associated with the irreducible power series $f$ is defined as the set of intersection numbers $i_0(f,h)$, where $h$ runs over power series such that $h\not\equiv 0$ ($\mo f$).

\medskip

\noindent The degree of conductor $c(f)$ is equal to the smallest element $c$ of $\Gamma (f)$ such that $c+N\in \Gamma (f)$ for all integers $N\geq 0$ (see \cite{Campillo-libro}, \cite{G-L-S}).

\medskip

\noindent For any reduced power series $f$ we define 
\[
\overline{\mu}(f):=c(f)-r(f)+1.
\]

\noindent In particular, if $f$ is irreducible then $\overline{\mu}(f)=c(f)$.
\begin{prop}
\label{pp:222}
Let $f=f_1\cdots f_r \in \bK[[x,y]]$ be a reduced power series, where $f_i$ is irreducible for $i=1,\ldots, r$. Then

\begin{enumerate}
\item[(i)] $\overline{\mu}(f)=\overline{\mu}(uf)$ for any unit $u$ of $\bK[[x,y]]$. 
\item [(ii)] 
\[
\overline{\mu}(f)+r-1=\sum_{i=1}^r\overline{\mu}(f_i)+2\sum_{1\leq i<j\leq r}
i_0(f_i,f_j).\]
\item [(iii)] Let $l$ be a regular parameter such that $i_0(f_i,l)\not \equiv 0$ \hbox{\rm(}$\mo p$\hbox{\rm)} for $i=1,\ldots, r$. Then 
\[
i_0(f,{\cal P}_l(f))=\overline{\mu}(f)+i_0(f,l)-1.
\]
\item [(iv)] $\overline{\mu}(f)=\mu(f)$ if and only if $\mu(f)=2\delta(f)-r(f)+1$.
\item [(v)]  $\overline{\mu}(f)\geq0$ and $\overline{\mu}(f)=0$ if and only if $\ord(f)=1$.

\end{enumerate}
\end{prop}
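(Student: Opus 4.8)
\noindent The plan is to deduce everything from two facts that hold over a field of arbitrary characteristic: the additivity $\delta(f)=\sum_{i=1}^{r}\delta(f_i)+\sum_{1\le i<j\le r}i_0(f_i,f_j)$ of the double point number, and the symmetry $c(g)=2\delta(g)$ for every reduced $g$. The latter holds because $\mathcal{O}_g=\bK[[x,y]]/(g)$ is a hypersurface, hence Gorenstein in any characteristic, so in the chain $\mathcal{C}\subseteq\mathcal{O}_g\subseteq\overline{\mathcal{O}_g}$ one has $\dim_{\bK}\mathcal{O}_g/\mathcal{C}=\dim_{\bK}\overline{\mathcal{O}_g}/\mathcal{O}_g=\delta(g)$, whence $c(g)=\dim_{\bK}\overline{\mathcal{O}_g}/\mathcal{C}=2\delta(g)$. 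Item (i) needs neither of these: since $(uf)=(f)$ we have $\mathcal{O}_{uf}=\mathcal{O}_f$, hence the same integral closure, conductor and number of irreducible factors, so $\overline{\mu}(uf)=\overline{\mu}(f)$.

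\noindent Item (iv) is then immediate: $c(f)=2\delta(f)$ gives $\overline{\mu}(f)=c(f)-r(f)+1=2\delta(f)-r(f)+1$, so $\overline{\mu}(f)=\mu(f)$ precisely when $\mu(f)=2\delta(f)-r(f)+1$. For (ii), combine $c(f)=2\delta(f)$, the additivity of $\delta$, and $c(f_i)=\overline{\mu}(f_i)$ (valid because $f_i$ is irreducible):
\[
\overline{\mu}(f)+r-1=c(f)=2\delta(f)=\sum_{i=1}^{r}2\delta(f_i)+2\sum_{1\le i<j\le r}i_0(f_i,f_j)=\sum_{i=1}^{r}\overline{\mu}(f_i)+2\sum_{1\le i<j\le r}i_0(f_i,f_j).
\]
For (v), rewrite (ii) as $\overline{\mu}(f)=\sum_{i=1}^{r}\overline{\mu}(f_i)+2\sum_{1\le i<j\le r}i_0(f_i,f_j)-(r-1)$. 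Each $\overline{\mu}(f_i)=c(f_i)$ is the conductor of the numerical semigroup $\Gamma(f_i)$, so $\overline{\mu}(f_i)\ge 0$, with equality exactly when $\Gamma(f_i)=\bN$, i.e.\ when $\ord(f_i)=1$; moreover $i_0(f_i,f_j)\ge 1$ for $i\ne j$. Hence $\overline{\mu}(f)\ge 2\binom{r}{2}-(r-1)=(r-1)^2\ge 0$, and $r\ge 2$ forces $\overline{\mu}(f)\ge 1$. Therefore $\overline{\mu}(f)=0$ implies $r=1$, and then $\overline{\mu}(f)=c(f_1)=0$ gives $\ord(f)=1$; the converse is clear, a series of order one being a unit times a regular parameter.

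\noindent The substance is (iii), a characteristic-$p$ refinement of Teissier's lemma; I would treat the irreducible case first. Let $f$ be irreducible with $i_0(f,l)\not\equiv 0\ (\mo p)$. Applying an automorphism of $\bK[[x,y]]$ carrying $l$ to $x$ — under which $\mathcal{P}_l(f)$ acquires the automorphism's Jacobian as a unit factor, so $i_0$ is unchanged — I may assume $l=x$, and then $\mathcal{P}_x(f)=-\partial f/\partial y$, so the goal becomes $i_0(f,\partial f/\partial y)=c(f)+i_0(f,x)-1$. Fix a primitive parametrization $\phi(t)=(x(t),y(t))$ of $f$ and set $\psi(g)=g(x(t),y(t))$, so $i_0(f,g)=\ord_t\psi(g)$. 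Differentiating $\psi(f)=0$ yields $\psi(\partial f/\partial x)\,x'(t)+\psi(\partial f/\partial y)\,y'(t)=0$, while $\ord_t x'(t)=\ord_t x(t)-1=i_0(f,x)-1$ exactly because $p\nmid i_0(f,x)$. Combining these, $\ord_t\psi(\partial f/\partial y)=\ord_t\psi(\partial f/\partial x)+i_0(f,x)-1-\ord_t y'(t)$, so the claim reduces to the classical ``conductor--different'' identity $\ord_t\psi(\partial f/\partial x)-\ord_t y'(t)=c(f)$ (equivalently, with the roles of $x$ and $y$ exchanged, $\ord_t\psi(\partial f/\partial y)-\ord_t x'(t)=c(f)$ when $x'(t)\ne 0$), relating the conductor of a branch to the order of the Jacobian of its parametrization; I expect its standard proof to transfer to characteristic $p$. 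Granting the irreducible case, the general one follows from the product rule $\mathcal{P}_l(f)=\sum_{j=1}^{r}\bigl(\prod_{k\ne j}f_k\bigr)\mathcal{P}_l(f_j)$: restricted to the branch $f_i$ only the $j=i$ summand survives, so $i_0(f_i,\mathcal{P}_l(f))=\sum_{k\ne i}i_0(f_i,f_k)+i_0(f_i,\mathcal{P}_l(f_i))$; summing over $i$ and substituting (ii) gives $i_0(f,\mathcal{P}_l(f))=\overline{\mu}(f)+i_0(f,l)-1$.

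\noindent The main obstacle is the conductor--different identity in characteristic $p$, and the degeneracies it must accommodate: one of $\partial f/\partial x,\partial f/\partial y$ may vanish identically and likewise one of $x'(t),y'(t)$, which is why the identity is stated in two symmetric forms and the argument passes between them — note that $x'(t)$ and $y'(t)$ cannot vanish simultaneously, by primitivity of $\phi$, and $\partial f/\partial x,\partial f/\partial y$ cannot both vanish on the branch since $f$ is reduced and singular. Apart from that identity, the one genuinely delicate point is the equality $\ord_t x'(t)=i_0(f,x)-1$, which is false without $p\nmid i_0(f,x)$ and is exactly what dictates the hypothesis in (iii).
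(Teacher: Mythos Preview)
Your approach matches the paper's almost exactly: (i) is immediate; (iv) and (ii) both rest on Gorenstein's $c=2\delta$ (you derive the conductor additivity $c(f)=\sum_i c(f_i)+2\sum_{i<j}i_0(f_i,f_j)$ from $\delta$-additivity, while the paper cites it as Campillo's lemma --- same content); (v) follows from (ii) by the same estimate $\overline{\mu}(f)\ge (r-1)^2$; and (iii) is reduced to the irreducible case (the Dedekind formula $i_0(f,\mathcal{P}_l(f))=c(f)+i_0(f,l)-1$ when $p\nmid i_0(f,l)$) plus the product-rule expansion of $\mathcal{P}_l(f_1\cdots f_r)$, exactly as in the paper. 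The paper does not prove the Dedekind formula either but cites it from \cite{GB-P2016}, so your identification of it as ``the main obstacle'' is on target.

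One wrinkle in your sketch of the irreducible case: after differentiating $\psi(f)=0$ you reduce the goal to the form $\ord_t\psi(\partial f/\partial x)-\ord_t y'(t)=c(f)$, but this requires $y'(t)\ne 0$, which is not guaranteed by the hypothesis $p\nmid i_0(f,x)$. The identity you actually need is already the $x$-form $\ord_t\psi(\partial f/\partial y)-\ord_t x'(t)=c(f)$ (your parenthetical), and the relation obtained from differentiating $\psi(f)=0$ lets you pass between the two forms only when \emph{both} $x'(t)$ and $y'(t)$ are nonzero; it does not reduce one to the other in the degenerate cases you yourself flag. The proof of the Dedekind formula in characteristic $p$ given in \cite[Lemma~3.1]{GB-P2016} works directly with a Weierstrass presentation of $f$ under the single hypothesis $p\nmid i_0(f,l)$, so your detour is unnecessary and the circularity you sensed is real.
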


\noindent \begin{proof} Property $(i)$ is obvious. To check $(ii)$ observe that
\[
\sum_{i=1}^r\overline{\mu}(f_i)+2\sum_{1\leq i<j\leq r}
i_0(f_i,f_j)=\sum_{i=1}^r c(f_i)+2\sum_{1\leq i<j\leq r}
i_0(f_i,f_j)=c(f)=\overline{\mu}(f)+r-1,
\]

\noindent by \cite[Lemma 2.1, p. 381]{Campillo}.  Property $(iii)$ in the case $r=1$ reduces to the Dedekind formula $i_0(f,{\cal P}_l(f))=c(f)+i_0(f,l)-1$ provided that $i_0(f,l)\not \equiv 0$ ($\mo p$)  \cite[Lemma 3.1]{GB-P2016}. To check the general case we apply the Dedekind formula to the irreducible factors $f_i$ of $f$ and we get

\begin{eqnarray*}
i_0(f,{\cal P}_l(f))&=&\sum_{i=1}^r i_0(f_i,{\cal P}_l(f))=\sum_{i=1}^r i_0\left(f_i,{\cal P}_l(f_i)\frac{f}{f_i}\right)\\
&=&\sum_{i=1}^r\left(i_0(f_i,{\cal P}_l(f_i))+ \sum_{j\neq i}i_0(f_i,f_j)\right )\\
&=&\sum_{i=1}^r\left(\overline{\mu}(f_i)
+i_0(f_i,l)-1+\sum_{j\neq i}i_0(f_i,f_j)\right)\\
&=&\sum_{i=1}^r \overline{\mu}(f_i)+2 \sum_{1\leq i<j\leq r}
i_0(f_i,f_j)+i_0(f,l)-r\\
&=&\overline{\mu}(f)+r-1+i_0(f,l)-r=\overline{\mu}(f)+i_0(f,l)-1.
\end{eqnarray*}

\medskip
\noindent Property $(iv)$ follows since $c(f)=2\delta(f)$ for any reduced power series $f$ by the Gorenstein theorem (see for example \cite[Section 5]{Ploski2014}). 

\noindent Now we prove Property $(v)$.  If $f$ is irreducible then $\overline{\mu}(f)=c(f)\geq 0$ with equality if and only if $\ord(f)=1$. Suppose that $r>1$. Then by $(ii)$ we get 
\[
\overline{\mu}(f)+r-1\geq 2\sum_{1\leq i<j\leq r}
i_0(f_i,f_j)\geq r(r-1)
\]

\noindent  and $\overline{\mu}(f)\geq(r-1)^2>0$, which proves $(v)$.
\end{proof}

\medskip

\begin{nota}
\label{nota1}
Using Proposition \ref{pp:222} (ii) we check the following property:\\ Let $f=g_1\cdots g_s\in \bK[[x,y]]$ be a reduced  power series, where the power series $g_i$ for $i=1,\ldots, s$ are pairwise coprime. Then 
\[\overline{\mu}(f)+s-1=\sum_{i=1}^s \overline{\mu}(g_i)+2 \sum_{1\leq i<j\leq s}i_0(g_i,g_j).\]
\end{nota}

\noindent Let $\overrightarrow w=(n,m)\in (\bN_+)^2$  be a pair of strictly positive integers. In the sequel we call $\overrightarrow w$ a {\em weight}.

\medskip

\noindent Let $f=\sum c_{\alpha \beta}x^{\alpha}y^{\beta}\in \bK[[x,y]]$ be a power series. Then
\begin{itemize} 
\item the $\overrightarrow w$ {\em -order} of $f$ is $\ord_{\overrightarrow w}(f)=\inf \{\alpha n +\beta m\;:\; c_{\alpha \beta}\neq 0\}$,
\item  the $\overrightarrow w${\em -initial form} of $f$ is  $\ini_{\overrightarrow w}(f)=\sum_{\alpha n+\beta m=w} c_{\alpha \beta}x^{\alpha}y^{\beta},$ where $w=\ord_{\overrightarrow w}(f)$, 
\item $R_{\overrightarrow w}(f)=f-\ini_{\overrightarrow w}(f)$.
\end{itemize}

\noindent Thus $R_{\overrightarrow w}(f)$ is a power series of  $\overrightarrow w$-order greater than 
$\ord_{\overrightarrow w}(f)$.\\ Note that $\ord_{\overrightarrow w}(x)=n$ and $\ord_{\overrightarrow w}(y)=m$.

\medskip

\noindent A power series $f$ is {\em semi-quasihomogeneous} (with respect to $\overrightarrow w$) if the system of equations

\[\left\{ \begin{array}{l}
\frac{\partial}{\partial x} \ini_{\overrightarrow w}(f)=0,\\
\\
 \frac{\partial }{\partial y}\ini_{\overrightarrow w}(f)=0 
 \end{array}
 \right.
 \]
 
\noindent has the only solution $(x,y)=(0,0)$.

\medskip

\noindent A power series $f$ is {\em convenient} if $f(x,0)\cdot f(0,y)\neq 0$. 

\medskip

\noindent Suppose that $\ini_{\overrightarrow w}(f)$ is convenient and the line $\alpha n+\beta m= \ord_{\overrightarrow w}(f)$ intersects the axes in points $(m,0)$ and $(0,n)$. Let $d=\gcd(m,n)$. Then $\ini_{\overrightarrow w}(f)=F(x^{m/d},y^{n/d}),$ where $F(u,v)\in \bK[u,v]$ is a homogeneous polynomial of degree $d$.

\begin{prop}
\label{qh}
Suppose that $\ini_{\overrightarrow w}(f)$ has no multiple factors. Then 
\[\overline{\mu}(f)=\left(\frac{\ord_{\overrightarrow w}(f)}{n}-1\right)\cdot\left(\frac{\ord_{\overrightarrow w}(f)}{m}-1\right). \]
\end{prop}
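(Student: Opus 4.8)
Put $w=\ord_{\overrightarrow w}(f)$ and $g=\ini_{\overrightarrow w}(f)$, so that $g$ is a reduced $\overrightarrow w$-homogeneous polynomial of $\overrightarrow w$-degree $w$ and $f=g+R_{\overrightarrow w}(f)$ with $\ord_{\overrightarrow w}(R_{\overrightarrow w}(f))>w$. Write $d=\gcd(n,m)$, $n'=n/d$, $m'=m/d$. The first step is to describe $g$. Its monomials all lie on the line $\alpha n'+\beta m'=w/d$, consecutive ones differing by $(m',-n')$ since $\gcd(n',m')=1$; writing $g=cx^{\epsilon_1}y^{\epsilon_2}h$ with $x\nmid h$, $y\nmid h$ and regrouping, $h$ becomes a homogeneous polynomial in $x^{m'}$ and $y^{n'}$, which factored over $\bK$ yields
\[
g=c\,x^{\epsilon_1}y^{\epsilon_2}\prod_{i=1}^{k}\bigl(y^{n'}-\lambda_i x^{m'}\bigr),\qquad c\in\bK^{*},\quad \epsilon_1,\epsilon_2\in\{0,1\},\quad \lambda_i\in\bK^{*}\ \text{pairwise distinct},
\]
where $\epsilon_1,\epsilon_2\le 1$ and the $\lambda_i$ are distinct because $g$ has no multiple factors, and each $y^{n'}-\lambda_i x^{m'}$ is irreducible because $\gcd(n',m')=1$. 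Comparing $\overrightarrow w$-degrees gives $w=d(\epsilon_1 n'+\epsilon_2 m'+k\,n'm')$, hence $w/n=\epsilon_1+km'+\epsilon_2 m'/n'$ and $w/m=\epsilon_2+kn'+\epsilon_1 n'/m'$.

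Next I would reduce everything to a computation on $g$. By the Newton--Puiseux algorithm (equivalently, Hensel's lemma for the $\overrightarrow w$-filtration) the factorization of $g$ above into pairwise coprime irreducible factors lifts to a factorization $f=u\,f_1\cdots f_r$ of $f$ into branches, with $\ini_{\overrightarrow w}(f_j)$ proportional to the corresponding factor of $g$. A branch lying over $x$ or over $y$ then has order $1$, so $\overline{\mu}(f_j)=0$; a branch $f_j$ lying over $y^{n'}-\lambda_i x^{m'}$ has multiplicity $\min(n',m')$, and since $\gcd(n',m')=1$ no monomial of $\overrightarrow w$-order above $w$ can introduce a new characteristic exponent, so the semigroup of $f_j$ is $\langle n',m'\rangle$ and $\overline{\mu}(f_j)=c(f_j)=(n'-1)(m'-1)$. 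For the same reason the intersection number of two branches of $f$ is read off from their initial forms and equals the intersection number of the corresponding factors of $g$: namely $n'm'$ for two factors $y^{n'}-\lambda_i x^{m'}$, $n'$ (resp.\ $m'$) for $x$ (resp.\ $y$) against such a factor, and $1$ for $x$ against $y$. (One may also invoke here the known formula for the semigroup of an irreducible semi-quasihomogeneous germ in place of the ad hoc argument.)

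Since $r$, the numbers $\overline{\mu}(f_j)$ and the numbers $i_0(f_i,f_j)$ thus agree for $f$ and for $g$, Proposition \ref{pp:222}(ii) gives $\overline{\mu}(f)=\overline{\mu}(g)$, and the same proposition applied to $g$ (with $r=\epsilon_1+\epsilon_2+k$) gives
\[
\overline{\mu}(g)=1-\epsilon_1-\epsilon_2-k+k(n'-1)(m'-1)+k(k-1)n'm'+2\epsilon_1 kn'+2\epsilon_2 km'+2\epsilon_1\epsilon_2 .
\]
Substituting the expressions for $w/n$ and $w/m$ obtained above, a routine manipulation (using only that $\epsilon_1,\epsilon_2\in\{0,1\}$) shows that this equals $(w/n-1)(w/m-1)$, which finishes the proof. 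The only genuinely delicate point is the second paragraph --- that passing from $g$ to $f$ alters neither the equisingularity type of the individual branches nor their mutual intersection numbers; in characteristic $p$ this needs a careful run of the Newton--Puiseux process, since the analytic triviality of a semi-quasihomogeneous deformation may fail and Euler's identity degenerates when $p\mid w$, so this is where the bulk of the work lies.
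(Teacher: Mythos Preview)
Your approach is essentially the same as the paper's: factor the $\overrightarrow w$-initial form, lift the factorization to $f$ via Hensel's Lemma, compute $\overline{\mu}$ of each branch and the pairwise intersection numbers, and assemble everything with Proposition~\ref{pp:222}(ii). The paper carries this out as a five-case analysis according to whether $\ini_{\overrightarrow w}(f)$ is $c x$, $c y$, $c xy$, convenient, or a product of $x$ or $y$ or $xy$ with a convenient form; your parametrization by $(\epsilon_1,\epsilon_2,k)$ packages all of these at once, which is a mild improvement in bookkeeping.

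Where your write-up differs from the paper is in the second paragraph, and there you are making the step sound harder than it is. The lifting of the factorization is exactly Hensel's Lemma for the $\overrightarrow w$-filtration (the paper's Lemma~\ref{Hensel}), valid in any characteristic. The intersection numbers $i_0(f_i,f_j)$ are not obtained by any equisingularity or Newton--Puiseux argument but by the elementary Lemma~\ref{A1}: since the $\overrightarrow w$-initial forms of the $f_i$ are pairwise coprime, that lemma gives $i_0(f_i,f_j)=\dfrac{\ord_{\overrightarrow w}(f_i)\,\ord_{\overrightarrow w}(f_j)}{nm}$ on the nose, with no delicate analysis. Finally, for a branch $f_j$ with $\ini_{\overrightarrow w}(f_j)=a_j x^{m'}+b_j y^{n'}$ the paper simply quotes the known fact (from \cite{GB-P2015}) that $\Gamma(f_j)=m'\bN+n'\bN$, whence $\overline{\mu}(f_j)=c(f_j)=(m'-1)(n'-1)$. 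None of this depends on Euler's identity or on $p\nmid w$, so your closing caveat is unnecessary; once you invoke Lemmas~\ref{Hensel} and~\ref{A1} and the cited semigroup computation, the proof is complete and characteristic-free.
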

\begin{proof} In the proof we will use lemmas collected in the Appendix. \\Observe that if $\ini_{\overrightarrow w}(f)$ has no multiple factors then  $\ini_{\overrightarrow w}(f)= m_{\overrightarrow w} (f) \left( \ini_{\overrightarrow w}(f)\right)^o$, where
$m_{\overrightarrow w} (f) \in \{1,x,y,xy\}$ and $\left( \ini_{\overrightarrow w}(f)\right)^o$ is a convenient power series or a constant. To prove the proposition we will use Hensel's Lemma (see Lemma \ref{Hensel}) and Remark \ref{nota1}. We have to consider several cases.\\

\noindent {Case 1:} $\ini_{\overrightarrow w}(f)=(\hbox{\rm const})\cdot x$ or $\ini_{\overrightarrow w}(f)=(\hbox{\rm const})\cdot y$.\\

\noindent In this case $\ord(f)=1$ and by Proposition \ref{pp:222}\emph{(v)} $\overline{\mu}(f)=0$. If $\ini_{\overrightarrow w}(f)=(\hbox{\rm const})\cdot x$ (resp. $\ini_{\overrightarrow w}(f)=(\hbox{\rm const})\cdot y$) then
$\ord_{\overrightarrow w}(f)=n$ (resp. $\ord_{\overrightarrow w}(f)=m$) and $$\left(\frac{\ord_{\overrightarrow w}(f)}{n}-1\right)\left(\frac{\ord_{\overrightarrow w}(f)}{m}-1\right)=0.$$\\

\noindent {Case 2:} $\ini_{\overrightarrow w}(f)=(\hbox{\rm const})\cdot xy.$\\

\noindent By Hensel's Lemma (see Lemma \ref{Hensel}) $f=f_1f_2$, where $\ini_{\overrightarrow w}(f_1)=c_1x$, 
$\ini_{\overrightarrow w}(f_2)=c_2y$ with constants $c_1,c_2\neq 0$. Using Remark \ref{nota1} and Lemma \ref{A1} we get
\[
\overline{\mu}(f)+1=\overline{\mu}(f_1f_2)+1=\overline{\mu}(f_1)+\overline{\mu}(f_2)+2i_0(f_1,f_2)=0+0+2.1
\]

\noindent  and $\overline{\mu}(f)=1$. On the other hand $\ord_{\overrightarrow w}(f)=n+m$ and 
\[
\left(\frac{\ord_{\overrightarrow w}(f)}{n}-1\right)\left(\frac{\ord_{\overrightarrow w}(f)}{m}-1\right)=1.
\]

\noindent {Case 3:} 
The power series $\ini_{\overrightarrow w} f$ is convenient. \\

\noindent Assume additionally that the line $n\alpha +m\beta=\ord_{\overrightarrow w}(f)$ intersects the axes in points $(m,0)$ and $(0,n)$. Let $d=\gcd(n,m)$.  Then the $\overrightarrow w$-initial form of $f$ is $$\ini_{\overrightarrow w} f=\prod_{i=1}^d\left(a_ix^{m/d}+b_iy^{n/d}\right),$$ where $a_ix^{m/d}+b_iy^{n/d}$ are pairwise coprime. By Hensel's Lemma (see Lemma \ref{Hensel}) we get a factorization $f=\prod_{i=1}^d f_i$, where $\ini_{\overrightarrow w} f_i=a_ix^{m/d}+b_iy^{n/d}$ for $i=1,\ldots, d$. The factors $f_i$ are irreducible with semigroup  $\Gamma(f_i)=\frac{m}{d}\bN+\frac{n}{d}\bN$ and 
$$\overline{\mu}(f_i)=c(f_i)=\left(\frac{m}{d}-1\right)\left(\frac{n}{d}-1\right)$$ (see, for example \cite{GB-P2015}). Moreover by Lemma \ref{A1} we have
\[
i_0(f_i,f_j)=\frac{\ord_{\overrightarrow w} f_i \ord_{\overrightarrow w} f_j}{mn}=\frac{mn}{d^2}, \;\hbox{\rm for } i\neq j
\]

\noindent and we get by Proposition \ref{pp:222}\emph{(ii)}
\[
 \overline{\mu}(f)+d-1=\sum_{i=1}^d  \overline{\mu}(f_i) + 2 \sum_{1\leq i<j\leq d}i_0(f_i,f_j)=d\left(\frac{m}{d}-1\right)\left(\frac{n}{d}-1\right)+2\frac{d(d-1)}{2}\frac{mn}{d^2},
\]

\noindent which implies $ \overline{\mu}(f)=(m-1)(n-1)=\left(\frac{\ord_{\overrightarrow w} f}{n}-1\right)\left(\frac{\ord_{\overrightarrow w} f}{m}-1\right)$, since the weighted order of $f$ is $\ord_{\overrightarrow w} f=mn$.\\

\medskip

\noindent Now consider the general case, that is when the line $n\alpha +m\beta=\ord_{\overrightarrow w}(f)$ intersects the axes in points $(m_1,0)=\left(\frac{\ord_{\overrightarrow w} f}{n},0\right)$   and $(0,n_1)=\left(0,\frac{\ord_{\overrightarrow w}(f)}{m}\right)$. Then $f$ is semi-quasihomogeneous with respect to $\overrightarrow{w_1}=(n_1,m_1)$ and the line $n_1\alpha +m_1\beta=\ord_{\overrightarrow{w_1}}(f)$ intersects the axes in points $(m_1,0)$ and $(0,n_1)$. By the first part of the proof we get 
\[
\overline{\mu}(f)=(m_1-1)(n_1-1)=\left(\frac{\ord_{\overrightarrow w}(f)}{n}-1\right)\left(\frac{\ord_{\overrightarrow w}(f)}{m}-1\right),
\]

\noindent which proves the proposition in Case 3.

\medskip

\noindent {Case 4:} $\ini_{\overrightarrow w}(f)=x\left(\ini_{\overrightarrow w}(f)\right)^o$ or $\ini_{\overrightarrow w}(f)=y\left(\ini_{\overrightarrow w}(f)\right)^o$, where $\left(\ini_{\overrightarrow w}(f)\right)^o$ is convenient.\\

\noindent This case follows from  Hensel's Lemma (Lemma \ref{Hensel}), Case 1 and Case 3.\\

\noindent {Case 5:} $\ini_{\overrightarrow w}(f)=xy\left(\ini_{\overrightarrow w}(f)\right)^o$, where $\left(\ini_{\overrightarrow w}(f)\right)^o$ is convenient.\\

\noindent This case follows from  Hensel's Lemma (Lemma \ref{Hensel}), Case 2 and Case 3.\\
\end{proof}

\begin{teorema}
\label{tt}
Suppose that $\ini_{\overrightarrow w}(f)$ has no multiple factors. Then $f$ is tame if and only if $f$ is a semi-quasihomogeneous singularity with respect to $\overrightarrow w$.
\end{teorema}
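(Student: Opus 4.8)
By Proposition~\ref{pp:222}(iv), $f$ is tame if and only if $\mu(f)=\overline{\mu}(f)$, and by Proposition~\ref{qh} the right-hand side equals $\left(\frac{w}{n}-1\right)\left(\frac{w}{m}-1\right)=\frac{(w-n)(w-m)}{nm}$ with $w:=\ord_{\overrightarrow w}(f)$; recall also the Melle--Wall inequality $\mu(f)\geq\overline{\mu}(f)$ \cite{Melle}. So it suffices to decide when $\mu(f)\leq\overline{\mu}(f)$. Write $g:=\ini_{\overrightarrow w}(f)$. I use repeatedly that semi-quasihomogeneity of $f$ with respect to $\overrightarrow w$ depends on $g$ alone — it says $\frac{\partial g}{\partial x}$ and $\frac{\partial g}{\partial y}$ vanish simultaneously only at the origin — and that $\ini_{\overrightarrow w}\frac{\partial f}{\partial x}=\frac{\partial g}{\partial x}$ whenever $\frac{\partial g}{\partial x}\neq0$ (similarly for $y$), because $\frac{\partial}{\partial x}$ sends the $\overrightarrow w$-order-$w$ part of $f$ either to $0$ or into $\overrightarrow w$-order exactly $w-n$, and sends $R_{\overrightarrow w}(f)$ into $\overrightarrow w$-order $>w-n$.

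\medskip

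\noindent\emph{Semi-quasihomogeneous $\Rightarrow$ tame.}
If $f$ is semi-quasihomogeneous then $\frac{\partial g}{\partial x},\frac{\partial g}{\partial y}$ are nonzero $\overrightarrow w$-homogeneous series, of $\overrightarrow w$-orders $w-n$ and $w-m$, having the origin as their only common zero; hence they are coprime, and the weighted B\'ezout formula for coprime $\overrightarrow w$-homogeneous series (the Appendix tool, cf.\ Lemma~\ref{A1}, that in Case~3 of Proposition~\ref{qh} gives $i_0(f_i,f_j)=\frac{\ord_{\overrightarrow w}f_i\,\ord_{\overrightarrow w}f_j}{nm}$) yields $i_0\!\left(\frac{\partial g}{\partial x},\frac{\partial g}{\partial y}\right)=\frac{(w-n)(w-m)}{nm}$. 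Since $\ini_{\overrightarrow w}\frac{\partial f}{\partial x}=\frac{\partial g}{\partial x}$, $\ini_{\overrightarrow w}\frac{\partial f}{\partial y}=\frac{\partial g}{\partial y}$, and intersection multiplicity does not increase when one passes to $\overrightarrow w$-initial forms,
\[
\mu(f)=i_0\!\left(\frac{\partial f}{\partial x},\frac{\partial f}{\partial y}\right)\le i_0\!\left(\frac{\partial g}{\partial x},\frac{\partial g}{\partial y}\right)=\frac{(w-n)(w-m)}{nm}=\overline{\mu}(f);
\]
combined with $\mu(f)\geq\overline{\mu}(f)$ this forces $\mu(f)=\overline{\mu}(f)$, so $f$ is tame. (This gives a polar-free proof of the tameness of semi-quasihomogeneous singularities, cf.\ \cite{BGreuel}.)

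\medskip

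\noindent\emph{Tame $\Rightarrow$ semi-quasihomogeneous.}
I argue contrapositively: assuming $g$ has no multiple factors but $f$ is not semi-quasihomogeneous, I show $\mu(f)>\overline{\mu}(f)$ (possibly $+\infty$) for every $f$ with that $\overrightarrow w$-initial form. First, a reduced $\overrightarrow w$-homogeneous singularity with $p\nmid w$ is automatically smooth away from the origin — a singular point off the origin would, under $(x,y)\mapsto(t^n x,t^m y)$, sweep out a one-dimensional singular locus, which a reduced plane curve over a perfect field cannot have — so then $f$ is semi-quasihomogeneous; hence failure of semi-quasihomogeneity forces $p\mid w$. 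Next, by Hensel's Lemma (Lemma~\ref{Hensel}) factor $g$ into its pairwise coprime $\overrightarrow w$-homogeneous factors and correspondingly $f=\prod_i f_i$ with $\ini_{\overrightarrow w}(f_i)\in\{cx,\ cy,\ a_ix^{m/d}+b_iy^{n/d}\}$; the $f_i$ with linear initial form are smooth, hence tame, and by Proposition~\ref{pp:222}(ii), Remark~\ref{nota1} and superadditivity of the Milnor number under coprime products ($\overline{\mu}$ being additive in the sense of Proposition~\ref{pp:222}(ii)), it is enough to show that an irreducible $f_i$ with $\ini_{\overrightarrow w}(f_i)=a_ix^{M}+b_iy^{N}$, $M=m/d$, $N=n/d$, $\gcd(M,N)=1$, satisfies $\mu(f_i)>\overline{\mu}(f_i)=(M-1)(N-1)$ whenever $p\mid MN$ (the case $p\nmid MN$ is handled by the first half, since then $\ini_{\overrightarrow w}(f_i)$ is semi-quasihomogeneous). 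Say $p\mid N$, hence $p\nmid M$. Then both monomials of $\ini_{\overrightarrow w}(f_i)$ are annihilated by $\frac{\partial}{\partial y}$, so $\ini_{\overrightarrow w}\frac{\partial f_i}{\partial y}$ has $\overrightarrow w$-order $>\ord_{\overrightarrow w}(f_i)-m$, while $\ini_{\overrightarrow w}\frac{\partial f_i}{\partial x}=a_iMx^{M-1}$ has $\overrightarrow w$-order $\ord_{\overrightarrow w}(f_i)-n$. Applying the Dedekind formula of Proposition~\ref{pp:222}(iii) with $l=y$ — legitimate since $i_0(f_i,y)=M$ with $p\nmid M$, and ${\cal P}_y(f_i)=\frac{\partial f_i}{\partial x}$ — gives $i_0\!\left(f_i,\frac{\partial f_i}{\partial x}\right)=(M-1)N$; combining with the chain-rule identity $i_0\!\left(f_i,\frac{\partial f_i}{\partial x}\right)+\ord_t x'(t)=i_0\!\left(f_i,\frac{\partial f_i}{\partial y}\right)+\ord_t y'(t)$ along a Newton--Puiseux parametrization $(x(t),y(t))$ of $f_i$ with $\ord_t x(t)=N$, $\ord_t y(t)=M$ (so $\ord_t x'(t)\geq N$ since $p\mid N$, while $\ord_t y'(t)=M-1$) gives $i_0\!\left(f_i,\frac{\partial f_i}{\partial y}\right)=(M-1)(N-1)+\ord_t x'(t)\ge(M-1)(N-1)+N$. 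A closer analysis of $\mu(f_i)=i_0\!\left(\frac{\partial f_i}{\partial x},\frac{\partial f_i}{\partial y}\right)$ — decomposing $\frac{\partial f_i}{\partial x}$ into the component along $\{x=0\}$ (on which $\frac{\partial f_i}{\partial y}$ vanishes to order $\ge N$, not $N-1$, precisely because $p\mid N$) and the components transverse to it — should then force $\mu(f_i)>(M-1)(N-1)$.

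\medskip

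\noindent The delicate point, which I expect to be the main obstacle, is exactly this last step: converting the ``excess'' produced by a derivative that vanishes identically in characteristic $p$ into a strict lower bound for the \emph{two-variable} intersection number $\mu(f_i)=i_0\!\left(\frac{\partial f_i}{\partial x},\frac{\partial f_i}{\partial y}\right)$ — the polar computations control $i_0\!\left(f_i,\frac{\partial f_i}{\partial y}\right)$, not $\mu(f_i)$ directly, and the bound is needed uniformly over all $f_i$ with the prescribed initial form. This is a sharp form of the failure of Teissier's lemma \cite[Chap.~II, Proposition~1.2]{Teissier} in positive characteristic; in the final write-up I would close it either by invoking the estimate for irreducible singularities of \cite{GB-P2016} (in its strengthened Section~4 version) or by a direct computation of $\mu(f_i)$ using the semigroup $\Gamma(f_i)=M\bN+N\bN$ and the minimal equation of the branch. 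One should also dispose of the few degenerate configurations ($\ord(f)\le1$, or $g$ a constant times a monomial) in which the statement is trivial or vacuous.
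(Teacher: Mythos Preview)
Your forward direction (semi-quasihomogeneous $\Rightarrow$ tame) is essentially the paper's argument, which packages the weighted B\'ezout computation as Lemma~\ref{A2}. One slip: you write $\mu(f)\le i_0(\partial g/\partial x,\partial g/\partial y)$ and justify it by ``intersection multiplicity does not increase when one passes to $\overrightarrow w$-initial forms'', but Lemma~\ref{A1} gives the \emph{opposite} inequality in general; what saves you is that in the semi-quasihomogeneous case the initial forms are coprime, so it is actually an equality. You also do not need Melle--Wall here: the inequality $\mu(f)\ge\overline\mu(f)$ already falls out of Lemma~\ref{A2} under the standing hypothesis.

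The backward direction, however, has a genuine gap --- one you yourself flag --- and the gap is unnecessary. You attempt to reduce via Hensel to irreducible pieces $f_i$ with $\ini_{\overrightarrow w}(f_i)=a_ix^{M}+b_iy^{N}$, then show $\mu(f_i)>(M-1)(N-1)$ when $p\mid MN$, leaving that last step open and invoking an unspecified ``superadditivity of the Milnor number under coprime products'' to reassemble (a formula of the shape $\mu(f_1f_2)=\mu(f_1)+\mu(f_2)+2i_0(f_1,f_2)-1$ is \emph{not} available in characteristic $p$, so this reassembly step is itself a second gap). The paper bypasses all of this. Lemma~\ref{A2} asserts $\mu(f)\ge\bigl(\tfrac{w}{n}-1\bigr)\bigl(\tfrac{w}{m}-1\bigr)$ with equality \emph{if and only if} $f$ is semi-quasihomogeneous, and the strict inequality in the non-SQH case comes for free from the very weighted B\'ezout estimate you already used: if one of $\partial g/\partial x,\partial g/\partial y$ vanishes identically then $\ord_{\overrightarrow w}(\partial f/\partial x)>w-n$ (resp.\ $>w-m$) and the product of weighted orders already exceeds $(w-n)(w-m)$; if both are nonzero but share a zero off the origin then the equality clause of Lemma~\ref{A1} fails. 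Combined with Proposition~\ref{qh} and Proposition~\ref{pp:222}(iv) this finishes both directions in two lines --- no Hensel factorization, no case split on $p\mid MN$, no appeal to Section~4 or to \cite{GB-P2016}.
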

\noindent \begin{proof}
We have $\overline{\mu}(f)=\left(\frac{\ord_{\overrightarrow w}(f)}{n}-1\right)\left(\frac{\ord_{\overrightarrow w}(f)}{m}-1\right)$ by Proposition \ref{qh}. On the other hand, by Lemma \ref{A2}, we get that
${\mu}(f)=\left(\frac{\ord_{\overrightarrow w}(f)}{n}-1\right)\left(\frac{\ord_{\overrightarrow w}(f)}{m}-1\right)$ if and only if the system of equations 
\[\left\{ \begin{array}{l}
\frac{\partial}{\partial x} \ini_{\overrightarrow w}(f)=0,\\
\\
 \frac{\partial }{\partial y}\ini_{\overrightarrow w}(f)=0 
 \end{array}
 \right.
 \]
 
\noindent has the only solution $(x,y)=(0,0)$. The theorem  follows from Proposition \ref{pp:222}\emph{(iv)}.
\end{proof}

\begin{ejemplo}
Let $f(x,y)=x^m+y^n+\sum_{\alpha n+\beta m>nm} c_{\alpha \, \beta}x^{\alpha}y^{\beta}$ and let $d=\gcd(m,n)$. Then $\ini_{\overrightarrow w}(f)=x^m+y^n$ has no multiple factors if and only if $d\not\equiv 0$ (mod $p$). If $d\not\equiv 0$ (mod $p$) then $f$ is tame if and only if $m\not\equiv 0$ (mod $p$) and $n\not\equiv 0$ (mod $p$).
\end{ejemplo}

\begin{coro}
\label{cccc}
The semi-quasihomogeneous singularities are tame.
\end{coro}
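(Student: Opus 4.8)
The plan is to deduce the corollary directly from Theorem~\ref{tt}. Recall that a \emph{semi-quasihomogeneous singularity} is a singularity $f$ that is semi-quasihomogeneous with respect to \emph{some} weight $\overrightarrow w=(n,m)$; so the only thing to verify is that for such an $f$ the standing hypothesis of Theorem~\ref{tt}, namely that $\ini_{\overrightarrow w}(f)$ has no multiple factors, is automatically satisfied. Once this is known, Theorem~\ref{tt} tells us that $f$ is tame if and only if $f$ is semi-quasihomogeneous with respect to $\overrightarrow w$, and the latter holds by assumption, so $f$ is tame.

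To establish that $g:=\ini_{\overrightarrow w}(f)$ is square-free I would argue by contraposition: if $g$ has a multiple factor then $f$ cannot be semi-quasihomogeneous with respect to $\overrightarrow w$. Note first that $g$ is a genuine polynomial (its support lies on the single line $n\alpha+m\beta=\ord_{\overrightarrow w}(f)$), and it is nonconstant, since $\ord(f)\geq 2$ forces $\ord_{\overrightarrow w}(f)\geq 2\min(n,m)>0$, so $g$ has no constant term. Working in the UFD $\bK[x,y]$, a multiple factor means $g=h^{2}q$ with $h$ irreducible and nonconstant. Differentiating, $\frac{\partial g}{\partial x}=h\bigl(2q\frac{\partial h}{\partial x}+h\frac{\partial q}{\partial x}\bigr)$ and likewise $\frac{\partial g}{\partial y}=h\bigl(2q\frac{\partial h}{\partial y}+h\frac{\partial q}{\partial y}\bigr)$, so $h$ divides both partial derivatives of $g$. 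Since $\bK$ is algebraically closed and $h$ is nonconstant, the zero set $\{h=0\}\subseteq\bK^{2}$ is an infinite curve and hence contains a point $(x_{0},y_{0})\neq(0,0)$; at this point both $\frac{\partial g}{\partial x}$ and $\frac{\partial g}{\partial y}$ vanish, so the system defining semi-quasihomogeneity has a nontrivial solution. This contradiction completes the reduction. I would emphasize that this argument is characteristic-free and uses no Euler relation — which is exactly the point, since all the delicate characteristic~$p$ behaviour has already been absorbed into Proposition~\ref{qh} and Theorem~\ref{tt}.

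I do not anticipate a real obstacle: the content of the corollary is entirely the above reduction. The one thing to be careful about is the characteristic~$p$ possibility that one of $\frac{\partial g}{\partial x}$, $\frac{\partial g}{\partial y}$ vanishes identically; but the divisibilities $h\mid\frac{\partial g}{\partial x}$ and $h\mid\frac{\partial g}{\partial y}$ hold verbatim in that case as well (the product rule still gives the displayed factorizations, with the term $h\frac{\partial q}{\partial x}$ or $2q\frac{\partial h}{\partial x}$ possibly zero), so the conclusion is unaffected and no separate case analysis is needed.
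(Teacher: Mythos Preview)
Your proposal is correct and follows the intended route: the paper states the corollary immediately after Theorem~\ref{tt} with no separate proof, and the only content is precisely the observation you supply, that semi-quasihomogeneity of $f$ with respect to $\overrightarrow w$ forces $\ini_{\overrightarrow w}(f)$ to be square-free. Your contrapositive argument via $g=h^{2}q\Rightarrow h\mid\tfrac{\partial g}{\partial x},\,h\mid\tfrac{\partial g}{\partial y}$ and the existence of a nonzero point on $\{h=0\}$ is valid in any characteristic and fills this implicit step cleanly.
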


\noindent Corollary \ref{cccc} is a particular case of the following

\begin{teorema} {\rm (Boubakri, Greuel, Markwig \cite[Theorem 9]{BGreuel}).}
The planar Newton non-degenerate singularities are tame.
\end{teorema}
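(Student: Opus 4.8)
\noindent My plan would be to extend the method behind Theorem~\ref{tt}: to compute $\overline{\mu}(f)$ and $\mu(f)$ separately and to show that each equals the ``Newton number'' of $f$, namely $2S-a-b+1$, where $S$ is the area of the region bounded by the Newton polygon of $f$ and the two coordinate axes and $(a,0),(0,b)$ are the points in which that polygon meets the axes; Proposition~\ref{pp:222}\emph{(iv)} then turns this into the Milnor formula $\mu(f)=2\delta(f)-r(f)+1$. I would first dispose of the case $\ord f=1$ (where $\mu(f)=\overline{\mu}(f)=0$) and, by a routine reduction, split off the coordinate axes --- the factors $x,y$ are smooth, and one checks the remaining factor is again Newton non-degenerate --- so that $f$ may be assumed convenient.

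The computation of $\overline{\mu}(f)$ is the soft half. Let $\gamma_1,\dots,\gamma_s$ be the compact edges of the Newton polygon, ordered from the one meeting the $y$-axis to the one meeting the $x$-axis, with primitive inner normals $\overrightarrow{w_k}=(n_k,m_k)$, and write $\ini_{\overrightarrow{w_k}}(f)=(\text{monomial})\cdot\widetilde P_k$ with $\widetilde P_k$ convenient on its segment. Newton non-degeneracy along $\gamma_k$ forces $\widetilde P_k$ to have no multiple factors, so repeated application of Hensel's Lemma (Lemma~\ref{Hensel}) yields a factorization $f=h_1\cdots h_s$ in which $h_k$ has $\gamma_k$ as its only compact edge and $\ini_{\overrightarrow{w_k}}(h_k)=\widetilde P_k$. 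Since $\delta$ and $r$, hence $\overline{\mu}$, are additive, Proposition~\ref{pp:222}\emph{(ii)} and Remark~\ref{nota1} give $\overline{\mu}(f)+s-1=\sum_k\overline{\mu}(h_k)+2\sum_{k<l}i_0(h_k,h_l)$, where $\overline{\mu}(h_k)=\bigl(\tfrac{D_k}{n_k}-1\bigr)\bigl(\tfrac{D_k}{m_k}-1\bigr)$ with $D_k=\deg_{\overrightarrow{w_k}}\widetilde P_k$ by Proposition~\ref{qh}, and $i_0(h_k,h_l)$ is read off from the two edges via Lemma~\ref{A1}. Adding up these contributions yields $\overline{\mu}(f)=2S-a-b+1$. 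It should be stressed that in characteristic $p$ the single-edge factors $h_k$ need \emph{not} themselves be tame, but this is irrelevant here because $\delta$ and $r$ are additive in every characteristic.

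The hard part --- and the step where Newton non-degeneracy is genuinely consumed --- is the other half: $\mu(f)=2S-a-b+1$, i.e.\ Kouchnirenko's formula in characteristic $p$. This cannot be extracted from the factorization $f=h_1\cdots h_s$, because the Milnor number is \emph{not} additive for coprime factors in characteristic $p$ (for instance $\mu\bigl((x+y^2)(x+y^3)\bigr)=+\infty$ when $p=2$, although both factors are smooth), and, as just noted, the $h_k$ need not be tame either. One is instead forced to analyse the Milnor algebra $\bK[[x,y]]/\bigl(\tfrac{\partial f}{\partial x},\tfrac{\partial f}{\partial y}\bigr)$ directly through the Newton filtration and show that its dimension is the same as over a field of characteristic $0$; the Newton non-degeneracy conditions --- and in particular those imposed at the \emph{vertices} of the polygon, which are exactly the places where $\partial/\partial x$ or $\partial/\partial y$ can degenerate in characteristic $p$ --- are precisely what makes this go through. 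This is the substantive content of \cite{BGreuel}. An equivalent way to organise the same computation, once the characteristic-$p$ form of Teissier's lemma of Sections~2--3 is available, is to carry out the bookkeeping with the polar intersection number $i_0(f,{\cal P}_l(f))$ in place of $\mu$: this number \emph{is} additive, by the Leibniz rule for ${\cal P}_l$ exactly as in the proof of Proposition~\ref{pp:222}\emph{(iii)}, and equals $\overline{\mu}(f)+i_0(f,l)-1$, so the whole difficulty becomes that of showing the ``wild'' defect $i_0(f,{\cal P}_l(f))-\mu(f)-i_0(f,l)+1$ vanishes --- and Newton non-degeneracy enters exactly there.
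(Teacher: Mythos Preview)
The paper does not actually prove this theorem; it is stated as a citation of \cite[Theorem 9]{BGreuel}, immediately after Corollary~\ref{cccc}, with no accompanying argument. The only hint is the remark in the Introduction that the proof of the semi-quasihomogeneous case ``can be extended to the case of Kouchnirenko nondegenerate singularities.'' So there is no proof in the paper to compare your proposal against.

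That said, your outline is a faithful elaboration of what that remark presumably intends. The $\overline{\mu}$ half --- Hensel-splitting $f$ along the edges of the Newton polygon, applying Proposition~\ref{qh} to each edge factor, and reassembling via Remark~\ref{nota1} and Lemma~\ref{A1} --- is exactly the multi-edge analogue of the paper's single-edge argument and goes through in any characteristic, since $\overline{\mu}$ is built from $\delta$ and $r$. Your identification of the $\mu$ half as the genuine obstacle is also correct: Lemma~\ref{A2} gives only an inequality edge by edge, the Milnor number is not additive over the Hensel factorization, and nothing in the paper's toolkit yields Kouchnirenko's equality $\mu(f)=2S-a-b+1$ in characteristic $p$. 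You rightly locate that step in \cite{BGreuel}. In short, your proposal is not a self-contained proof but a reduction to \cite{BGreuel} --- which is precisely what the paper itself does by citing it. One small caution: when you write that ``Newton non-degeneracy along $\gamma_k$ forces $\widetilde P_k$ to have no multiple factors,'' be aware that several inequivalent notions of non-degeneracy circulate in positive characteristic (cf.\ \cite{G-N}), and the implication you need depends on using the right one.
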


\section{Teissier's lemma in characteristic $p\geq 0$}
\label{Teissier}

\noindent  The intersection  theoretical approach to the Milnor number in characteristic zero \cite{Cassou-P} is based on a lemma due to Teissier who proved a more general result (the case of hypersurfaces) in \cite[Chapter II, Proposition 1.2]{Teissier}. A general formula on isolated complete intersection singularity is due to Greuel \cite{G} and L\^e \cite{Le}. In this section we study Teissier's Lemma in arbitrary characteristic $p\geq 0$.

\medskip

\noindent Let $f\in \bK[[x,y]]$ be a reduced power series and $l\in \bK[[x,y]]$ be a regular parameter. Assume that $l$ does not divide $f$ and consider the polar ${\cal P}_l(f)=\frac{\partial f}{\partial x}\frac{\partial l}{\partial y}-
\frac{\partial f}{\partial y}\frac{\partial l}{\partial x}$ of $f$ with respect to $l$.
In this section we assume, without loss of generality,  that $\ord(l(0,y))=1$.

\begin{lema}
\label{Le:1}
Let $f\in \bK[[x,y]]$ be a reduced power series and $l\in \bK[[x,y]]$ be a regular parameter. Then $i_0\left(l,{\cal P}_l(f)\right)\geq i_0(f,l)-1$ with equality if and only if $i_0(f,l)\not\equiv 0$ \hbox{\rm(}$\mo p$\hbox{\rm)}.
\end{lema}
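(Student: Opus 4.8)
The plan is to reduce the assertion to a one-variable computation on the smooth curve $\{l=0\}$. By the standing normalization $\ord(l(0,y))=1$, the Weierstrass preparation theorem lets me write $l=u\cdot(y-\phi(x))$ with $u\in\bK[[x,y]]$ a unit and $\phi\in\bK[[x]]$, $\phi(0)=0$; thus $t\mapsto(t,\phi(t))$ parametrizes $\{l=0\}$ and, for every $g\in\bK[[x,y]]$,
\[
i_0(l,g)=\dim_{\bK}\bK[[t]]/(g(t,\phi(t)))=\ord_t\,g(t,\phi(t)).
\]
Taking $g=f$ gives $N:=i_0(f,l)=\ord_t\,h(t)$, where $h(t):=f(t,\phi(t))$ is nonzero because $l$ does not divide $f$; taking $g={\cal P}_l(f)$ gives $i_0(l,{\cal P}_l(f))=\ord_t\,{\cal P}_l(f)(t,\phi(t))$.

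Next I would express ${\cal P}_l(f)$ along the curve through $h'$. Differentiating $l(t,\phi(t))=0$ gives $\frac{\partial l}{\partial x}(t,\phi(t))+\frac{\partial l}{\partial y}(t,\phi(t))\,\phi'(t)=0$; multiplying the chain-rule identity $h'(t)=\frac{\partial f}{\partial x}(t,\phi(t))+\frac{\partial f}{\partial y}(t,\phi(t))\,\phi'(t)$ by $\frac{\partial l}{\partial y}(t,\phi(t))$ and eliminating $\phi'(t)$ via the previous relation yields
\[
{\cal P}_l(f)(t,\phi(t))=\frac{\partial l}{\partial y}(t,\phi(t))\cdot h'(t).
\]
Since $\ord(l(0,y))=1$ forces $\frac{\partial l}{\partial y}(0,0)\neq 0$, the factor $\frac{\partial l}{\partial y}(t,\phi(t))$ is a unit in $\bK[[t]]$, whence $i_0(l,{\cal P}_l(f))=\ord_t\,h'(t)$.

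It then remains to compare $\ord_t h'(t)$ with $N=\ord_t h(t)$. Writing $h(t)=\sum_{k\geq N}c_kt^{k}$ with $c_N\neq 0$, the coefficient of $t^{N-1}$ in $h'(t)$ is $Nc_N$. Hence if $N\not\equiv 0$ \hbox{\rm(}$\mo p$\hbox{\rm)} then $Nc_N\neq 0$, so $\ord_t h'(t)=N-1$ and $i_0(l,{\cal P}_l(f))=i_0(f,l)-1$; whereas if $N\equiv 0$ \hbox{\rm(}$\mo p$\hbox{\rm)} then $Nc_N=0$, so $\ord_t h'(t)\geq N>N-1$ (the value $+\infty$ being allowed, which occurs exactly when ${\cal P}_l(f)$ vanishes identically on $\{l=0\}$). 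In all cases $i_0(l,{\cal P}_l(f))\geq i_0(f,l)-1$, with equality precisely when $i_0(f,l)\not\equiv 0$ \hbox{\rm(}$\mo p$\hbox{\rm)}, as required.

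I expect the only slightly delicate point to be the first step — the passage to the parametrized curve and the identity $i_0(l,g)=\ord_t g(t,\phi(t))$ — which is exactly where the normalization $\ord(l(0,y))=1$ enters, through Weierstrass preparation applied to $l$. Everything after that is the short computation in $\bK[[t]]$ above, and the characteristic-$p$ phenomenon is entirely accounted for by the dichotomy $p\mid N$ versus $p\nmid N$, which is what will later force the extra hypothesis $i_0(f,l)\not\equiv 0$ \hbox{\rm(}$\mo p$\hbox{\rm)} in the generalization of Teissier's lemma.
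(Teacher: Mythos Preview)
Your proof is correct and follows essentially the same route as the paper's: parametrize the smooth curve $\{l=0\}$, use the chain rule together with $\frac{d}{dt}l=0$ to identify ${\cal P}_l(f)$ restricted to the curve with a unit times $\frac{d}{dt}f$, and then compare $\ord_t h'(t)$ with $\ord_t h(t)-1$. The only cosmetic difference is that you obtain the parametrization $t\mapsto(t,\phi(t))$ via Weierstrass preparation, whereas the paper invokes a ``good parametrization'' $(\phi_1(t),\phi_2(t))$ with $\ord\phi_1=1$ and carries the extra unit factor $\phi_1'(t)$ through the identity ${\cal P}_l(f)(\phi(t))\,\phi_1'(t)=\frac{d}{dt}f(\phi(t))\cdot\frac{\partial l}{\partial y}(\phi(t))$; your normalization $\phi_1(t)=t$ simply sets that factor to~$1$.
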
 
\noindent \begin{proof}
Recall that $\ord(l(0,y))=1$. Let $\phi(t)=(\phi_1(t),\phi_2(t))$ be a good parametrization of the curve $l(x,y)=0$ (see \cite[Section 2]{Ploski2013}. In particular $0=l(\phi(t))$ so $\frac{d}{dt}l(\phi(t))=0$. On the other hand we have $\ord(\phi_1(t))=i_0(x,l)=\ord(l(0,y))=1$ and $ \phi'_1(0)\neq 0$. Differentiating $f(\phi(t))$ and  $l(\phi(t))$ we get

\begin{equation}
\label{mmm:1}
\frac{d}{dt}f(\phi(t))=\frac{\partial f}{\partial x}(\phi(t))\phi'_1(t)+\frac{\partial f}{\partial y}(\phi(t))\phi'_2(t)\\
\end{equation}

\noindent and
\begin{equation}
\label{mmm:2}
0=\frac{d}{dt}l(\phi(t))=\frac{\partial l}{\partial x}(\phi(t))\phi'_1(t)+\frac{\partial l}{\partial y}(\phi(t))\phi'_2(t).\\
\end{equation}

\noindent From (\ref{mmm:2})  we have  $\frac{\partial l}{\partial x}(\phi(t))\phi'_1(t)=-\frac{\partial l}{\partial y}(\phi(t))\phi'_2(t)$ and by (\ref{mmm:1}) and the definition of ${\cal P}_l(f)$  we get

\[
{\cal P}_l(f)(\phi(t))\phi'_1(t)=\frac{d}{dt}f(\phi(t))\frac{\partial l}{\partial y}(\phi(t)).
\]

\noindent  Since $\phi'_1(t)$ and $\frac{\partial l}{\partial y}(\phi(t))$ are units in $\bK[[t]]$ we have

\[
\ord({\cal P}_l(f)(\phi(t)))=\ord\left(\frac{d}{dt}f(\phi(t))\right)\geq \ord(f(\phi(t)))-1,
\]

\noindent with equality if and only if $\ord(f(\phi(t)))\not\equiv 0$ ($\mo p$). Now the lemma follows from the formula $i_0(h,l)=\ord(h(\phi(t)))$ which holds for every power series $h\in \bK[[x,y]]$.
\end{proof}

\begin{coro}
\label{c:111}
Suppose that $i_0(f,l)=\ord(f)\not\equiv 0$ \hbox{\rm(}$\mo p$\hbox{\rm)} for a regular parameter $l\in \bK[[x,y]]$. Then
\begin{enumerate}
\item[(a)] $i_0(l,{\cal P}_l(f))=\ord(f)-1$,
\item[(b)] $\ord({\cal P}_l(f))=\ord(f)-1$,
\item[(c)] if $h$ is an irreducible factor of ${\cal P}_l(f)$ then $i_0(l,h)=\ord(h)$.
\end{enumerate}
\end{coro}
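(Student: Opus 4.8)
The plan is to deduce each of the three statements from Lemma \ref{Le:1} together with the defining expression ${\cal P}_l(f)=\frac{\partial f}{\partial x}\frac{\partial l}{\partial y}-\frac{\partial f}{\partial y}\frac{\partial l}{\partial x}$ and the normalization $\ord(l(0,y))=1$ assumed throughout the section. For part (a), since $i_0(f,l)=\ord(f)\not\equiv 0$ $(\mo p)$, Lemma \ref{Le:1} applies in its equality case and gives immediately $i_0(l,{\cal P}_l(f))=i_0(f,l)-1=\ord(f)-1$.

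For part (b) the plan is to combine (a) with the general inequality $i_0(l,h)\geq \ord(h)$, valid for any power series $h$ and any regular parameter $l$ (the intersection number with a smooth branch is at least the multiplicity), applied to $h={\cal P}_l(f)$: this yields $\ord({\cal P}_l(f))\leq i_0(l,{\cal P}_l(f))=\ord(f)-1$. For the reverse inequality I would argue directly from the formula for ${\cal P}_l(f)$: both $\frac{\partial f}{\partial x}$ and $\frac{\partial f}{\partial y}$ have order at least $\ord(f)-1$, while $\frac{\partial l}{\partial x}$ and $\frac{\partial l}{\partial y}$ have order at least $0$, so every term of ${\cal P}_l(f)$ has order at least $\ord(f)-1$, whence $\ord({\cal P}_l(f))\geq \ord(f)-1$. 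Combining the two inequalities gives $\ord({\cal P}_l(f))=\ord(f)-1$.

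For part (c), write ${\cal P}_l(f)=\prod_k h_k$ as a product of irreducible factors (with $h$ one of them). Then $\ord({\cal P}_l(f))=\sum_k \ord(h_k)$ and $i_0(l,{\cal P}_l(f))=\sum_k i_0(l,h_k)$. Since by (a) and (b) these two sums are equal, and since $i_0(l,h_k)\geq \ord(h_k)$ for every $k$, equality must hold term by term; in particular $i_0(l,h)=\ord(h)$.

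The only point requiring a little care is the auxiliary inequality $\ord({\cal P}_l(f))\geq \ord(f)-1$ in part (b): one must be sure that the partial derivatives of $f$ really do have order at least $\ord(f)-1$ (true in any characteristic, since differentiation lowers the order of a monomial by at most one) and that no cancellation among the two terms of ${\cal P}_l(f)$ can drop the order below $\ord(f)-1$ — but this is automatic, as a lower bound on the order of each summand is a lower bound on the order of the sum. I expect no real obstacle; the corollary is essentially a bookkeeping consequence of Lemma \ref{Le:1} and the additivity of order and of intersection number over factorizations.
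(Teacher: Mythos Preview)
Your proposal is correct and follows essentially the same argument as the paper: part (a) is the equality case of Lemma~\ref{Le:1}, part (b) sandwiches $\ord({\cal P}_l(f))$ between the general bound $i_0(l,h)\geq \ord(h)$ (equivalently $\ord(l)\cdot\ord(h)\leq i_0(l,h)$) and the obvious lower bound coming from the orders of the partial derivatives, and part (c) uses additivity of order and of $i_0(l,\cdot)$ over the irreducible factorization together with the termwise inequality $i_0(l,h_k)\geq \ord(h_k)$. No differences worth noting.
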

\noindent \begin{proof} Property $(a)$  follows immediately from Lemma \ref{Le:1}. To check $(b)$ observe that we get $\ord({\cal P}_l(f))=\ord({\cal P}_l(f))\cdot \ord(l)\leq i_0(l, {\cal P}_l(f))=\ord(f)-1$, where the last equality follows from  $(a)$. The inequality
$\ord({\cal P}_l(f))\geq \ord(f)-1$  is obvious.

\medskip

\noindent Let ${\cal P}_l(f)=\prod_{i=1}^s h_i$, where $h_i$ is irreducible. From $(a)$ and $(b)$ we get 
\[
0=i_0(l,{\cal P}_l(f))-\ord({\cal P}_l(f))=\sum_{i=1}^s\left(i_0(l,h_i)-
\ord(h_i)\right).
\]

\noindent Since $i_0(l,h_i)\geq \ord(h_i)$  we have $i_0(l,h_i)=\ord(h_i)$ for $i=1,\ldots,s$ which proves $(c)$.
\end{proof}

\begin{prop}{\rm(Teissier's Lemma in characteristic $p$).}
\label{Teissier2}
Let $f\in \bK[[x,y]]$ be a reduced power series. Suppose that
\begin{enumerate}
\item[(i)] $i_0(f,l)\not\equiv 0$ \hbox{\rm(}$\mo p$\hbox{\rm)}, 
\item[(ii)] for any irreducible factor $h$ of $ {\cal P}_l(f)$ we get $i_0(l,h) \not\equiv 0$ \hbox{\rm(}$\mo p$\hbox{\rm)}. 
\end{enumerate}
\noindent Then \[i_0(f,{\cal P}_l(f))\leq \mu(f)+i_0(f,l)-1\] with equality if and only if
\begin{enumerate}
\item[(iii)] for any irreducible factor $h$ of ${\cal P}_l(f)$ we get $i_0(f,h) \not\equiv 0$ \hbox{\rm(}$\mo p$\hbox{\rm)}. 
\end{enumerate}
\end{prop}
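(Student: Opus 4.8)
The plan is to reduce to the case $l=y$ and then carry out, branch by branch over the polar curve, the parametrization argument already used to prove Lemma \ref{Le:1}.

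\textit{Reduction.} Since $\ord(l(0,y))=1$ the derivative $\partial l/\partial y$ is a unit, so $(x,y)\mapsto(x,l)$ is an automorphism of $\bK[[x,y]]$ carrying $l$ to the second coordinate and ${\cal P}_l(f)$ to a unit times $\partial f/\partial x$. Because $\mu$, intersection numbers and the set of irreducible factors of a series (up to units) are preserved, I may assume $l=y$ and ${\cal P}_l(f)=f_x:=\partial f/\partial x$; set $n:=i_0(f,l)=\ord f(x,0)$, so $p\nmid n$ by (i). Then $f_x\neq 0$ and $y\nmid f_x$, since the lowest term of $f_x(x,0)$ is $n\,a_n\,x^{n-1}\neq 0$ when $f(x,0)=a_nx^n+\cdots$. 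Moreover (ii) forces that no irreducible factor of $f_x$ divides $f$ and that $\gcd(f_x,f_y)$ is a unit, so $\mu(f)<\infty$ and all intersection numbers below are finite: if an irreducible $h\mid f_x$ divided the reduced series $f$, then (writing $f=hg$ with $h\nmid g$) $h$ would divide $\partial h/\partial x$; an irreducible series cannot divide both its partials (its singular locus is finite), so differentiating a parametrization of $h=0$ would force its second component to have vanishing derivative, i.e. $p\mid i_0(y,h)$, contradicting (ii) --- and the same argument applied to a common factor of $f_x$, $f_y$ (which would divide $f$ via the chain rule below) excludes $\mu(f)=\infty$.

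\textit{Branchwise estimate.} Factor $f_x=h_1\cdots h_s$ with the $h_i$ irreducible (repetitions allowed), and fix a parametrization $\psi_i(t)=(\psi_{i1}(t),\psi_{i2}(t))$ of $h_i=0$, so $i_0(g,h_i)=\ord g(\psi_i(t))$ for every $g$. As $h_i\mid f_x$ we have $f_x(\psi_i(t))=0$, hence by the chain rule $\frac{d}{dt}f(\psi_i(t))=f_y(\psi_i(t))\,\psi_{i2}'(t)$. The order of the right side is $i_0(f_y,h_i)+i_0(y,h_i)-1$, using $p\nmid i_0(y,h_i)$ from (ii) and $\psi_{i2}\neq 0$ (as $h_i\neq y$); the order of the left side is $\geq i_0(f,h_i)-1$, with equality iff $p\nmid i_0(f,h_i)$ (here $f(\psi_i(t))\neq 0$ because $h_i\nmid f$). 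Hence
\[
i_0(f,h_i)\leq i_0(f_y,h_i)+i_0(y,h_i),\qquad\text{with equality iff } p\nmid i_0(f,h_i).
\]

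\textit{Summation.} Summing over $i$ and using additivity of the intersection number,
\[
i_0(f,{\cal P}_l(f))=\sum_{i=1}^s i_0(f,h_i)\leq\sum_{i=1}^s\bigl(i_0(f_y,h_i)+i_0(y,h_i)\bigr)=i_0(f_y,f_x)+i_0(y,f_x).
\]
Now $i_0(f_y,f_x)=\mu(f)$, while $i_0(y,f_x)=i_0(l,{\cal P}_l(f))=i_0(f,l)-1$ by Lemma \ref{Le:1} and (i); therefore $i_0(f,{\cal P}_l(f))\leq\mu(f)+i_0(f,l)-1$. Since the bound of Lemma \ref{Le:1} is an equality under (i), equality holds here exactly when every branchwise inequality is an equality, i.e. exactly when $p\nmid i_0(f,h)$ for each irreducible factor $h$ of ${\cal P}_l(f)$ --- which is hypothesis (iii).

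\textit{Main obstacle.} The parametrization computation is routine once set up; the genuine work is the reduction step, where characteristic $p$ really enters --- deducing from (i)--(ii) that ${\cal P}_l(f)\neq 0$, that $\mu(f)<\infty$, and that $f$ and ${\cal P}_l(f)$ share no branch --- all resting on the dichotomy ``$\partial h/\partial x=0$'' versus ``$p\mid i_0(y,h)$'' for irreducible $h$.
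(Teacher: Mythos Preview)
Your argument is essentially the paper's: both parametrize each branch $h$ of the polar, use the chain rule together with hypothesis~(ii) to obtain the branchwise estimate $i_0(f,h)\leq i_0(l,h)+i_0(\partial f/\partial y,h)$ (with equality iff $p\nmid i_0(f,h)$), then sum and invoke Lemma~\ref{Le:1}. The only cosmetic difference is that you first reduce to $l=y$, whereas the paper keeps a general $l$ with $\ord(l(0,y))=1$ and derives the identity $\tfrac{d}{dt}f(\psi)\cdot\tfrac{\partial l}{\partial y}(\psi)=\tfrac{d}{dt}l(\psi)\cdot\tfrac{\partial f}{\partial y}(\psi)$ directly from ${\cal P}_l(f)(\psi)=0$.

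One claim in your reduction paragraph is wrong, though it does not damage the proof. Hypotheses (i)--(ii) do \emph{not} force $\gcd(f_x,f_y)$ to be a unit: in characteristic~$5$ the irreducible series $f=(y-x^2)^2+x^5$ has $f_x=-4x(y-x^2)$ and $f_y=2(y-x^2)$ with common factor $y-x^2$, yet $i_0(f,y)=4$, $i_0(y,x)=1$, $i_0(y,y-x^2)=2$ are all prime to~$5$. The flaw in your justification is that $\tfrac{d}{dt}f(\psi)=0$ only gives $f(\psi)\in\bK[[t^p]]$ in positive characteristic, not $f(\psi)=0$. Fortunately this is irrelevant: your correct argument that no factor of $f_x$ divides $f$ already makes $i_0(f,f_x)$ finite, and the branchwise estimate and summation remain valid even if some $i_0(f_y,h)=\infty$. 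In that situation $f(\psi)\in\bK[[t^p]]\setminus\{0\}$ forces $p\mid i_0(f,h)$, so ``equality'' and ``(iii)'' fail simultaneously and the biconditional survives. Just delete the assertion that $\mu(f)<\infty$.
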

\noindent \begin{proof} Fix an irreducible factor $h$ of ${\cal P}_l(f)$ and let 
$\psi(t)=(\psi_1(t),\psi_2(t))$ be a good parametrization of the curve $h(x,y)=0$. Then $\ord(l(\psi(t)))=i_0(l,h)\not\equiv 0$ \hbox{\rm(}$\mo p$\hbox{\rm)} by $(ii)$
and $\ord\left(\frac{d}{dt}l(\psi(t))\right)=\ord(l(\psi(t)))-1$. Differentiating $f(\psi(t))$ and $l(\psi(t))$ we get

\begin{equation}
\label{mm:1}
\frac{d}{dt}f(\psi(t))=\frac{\partial f}{\partial x}(\psi(t))\psi'_1(t)+\frac{\partial f}{\partial y}(\psi(t))\psi'_2(t),
\end{equation}

\noindent and

\begin{equation}
\label{mm:2}
\frac{d}{dt}l(\psi(t))=\frac{\partial l}{\partial x}(\psi(t))\psi'_1(t)+\frac{\partial l}{\partial y}(\psi(t))\psi'_2(t).
\end{equation}

\noindent Since  ${\cal P}_l(f)(\psi(t))=0$, it follows from (\ref{mm:1}) and (\ref{mm:2}) that

\begin{equation}
\label{mm:3}
\frac{d}{dt}f(\psi(t))\frac{\partial l}{\partial y}(\psi(t))=\frac{d}{dt}l(\psi(t))\frac{\partial f}{\partial y}(\psi(t)).
\end{equation}

\noindent Since $\frac{\partial l}{\partial y}(\psi(t))$ is a unit in $\bK[[t]]$, taking orders  in (\ref{mm:3}) we have

\begin{eqnarray*}
\ord(f(\psi(t)))-1 & \leq &\ord\left(\frac{d}{dt}f(\psi(t))\right)=\ord \left(\frac{d}{dt}l(\psi(t))\right)+\ord \left(\frac{\partial f}{\partial y}(\psi(t))\right)\\
&=&\ord(l(\psi(t)))-1+\ord\left(\frac{\partial f}{\partial y}(\psi(t))\right),
\end{eqnarray*}

\noindent where the last equality follows from $\ord(l(\psi(t)))\not\equiv 0$ \hbox{\rm(}$\mo p$\hbox{\rm)}.\\ Hence $i_0(f,h)\leq i_0(l,h)+i_0\left(\frac{\partial f}{\partial y},h\right).$

\noindent Summing up over all $h$ counted with multiplicities as factors of
${\cal P}_l(f)$ we obtain

\begin{equation}
\label{qqq}i_0(f,{\cal P}_l(f))\leq i_0(l,{\cal P}_l(f))+i_0\left(\frac{\partial f}{\partial y},{\cal P}_l(f)\right).
\end{equation} 

\noindent By Lemma \ref{Le:1} and assumption $(i)$ we have
$i_0\left(l,{\cal P}_l(f)\right)=i_0(f,l)-1.$ Moreover $i_0\left(\frac{\partial f}{\partial y},{\cal P}_l(f)\right)=\mu(f)$ since $\ord(l(0,y))=1$ and we get from the equality (\ref{qqq})
 \[i_0(f,{\cal P}_l(f))\leq \mu(f)+i_0(f,l)-1.\]
\medskip

\noindent The equality holds if and only if $i_0(f,h)=i_0(l,h)+i_0\left(\frac{\partial f}{\partial y},h\right)$ for every $h$, which is equivalent to the condition $i_0(f,h) \not\equiv 0$ \hbox{\rm(}$\mo p$\hbox{\rm)}, since $i_0(f,h) \not\equiv 0$ \hbox{\rm(}$\mo p$\hbox{\rm)} if and only if $\ord\left(\frac{d}{dt}f(\psi(t))\right)=\ord(f(\psi(t)))-1$.
\end{proof}

\begin{coro}{\rm (Teissier \cite[Chapter II, Proposition 1.2]{Teissier}).}
\label{car=0}
If $\ch \bK=0$ then $$i_0(f,{\cal P}_l(f))= \mu(f)+i_0(f,l)-1.$$
\end{coro}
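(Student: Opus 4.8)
The plan is to read the statement off Proposition~\ref{Teissier2}: over a field of characteristic zero the arithmetic side conditions of that proposition become vacuous, because with $p=0$ the relation ``$n\not\equiv 0$ ($\mo p$)'' reduces to ``$n\neq 0$'', which holds for the intersection number of any two coprime non-units.

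First I would check hypothesis $(i)$ of Proposition~\ref{Teissier2}: since $l$ is a regular parameter with $l\nmid f$, the series $f$ and $l$ are coprime and $i_0(f,l)\geq\ord(f)\geq 1$, so $i_0(f,l)\not\equiv 0$ ($\mo 0$). For $(ii)$ I would invoke Lemma~\ref{Le:1}: in characteristic zero its equality case gives $i_0(l,{\cal P}_l(f))=i_0(f,l)-1<+\infty$, so $l$ divides no irreducible factor $h$ of ${\cal P}_l(f)$, whence $i_0(l,h)$ is a finite positive integer and $i_0(l,h)\not\equiv 0$ ($\mo 0$). Proposition~\ref{Teissier2} then already yields $i_0(f,{\cal P}_l(f))\leq\mu(f)+i_0(f,l)-1$.

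It remains to secure the equality, i.e.\ to verify hypothesis $(iii)$: for every irreducible factor $h$ of ${\cal P}_l(f)$ one needs $i_0(f,h)\not\equiv 0$ ($\mo 0$), equivalently $h\nmid f$. I would argue by contradiction, reusing the computation in the proof of Proposition~\ref{Teissier2}: if an irreducible $h$ divided both $f$ and ${\cal P}_l(f)$, then along a good parametrization $\psi(t)$ of $h(x,y)=0$ we have $f(\psi(t))=0$, hence $\frac{d}{dt}f(\psi(t))=0$, while $\frac{d}{dt}l(\psi(t))\neq 0$ because $l\nmid h$; substituting into the identity $\frac{d}{dt}f(\psi(t))\,\frac{\partial l}{\partial y}(\psi(t))=\frac{d}{dt}l(\psi(t))\,\frac{\partial f}{\partial y}(\psi(t))$ forces $\frac{\partial f}{\partial y}(\psi(t))=0$, and then $\frac{\partial f}{\partial x}(\psi(t))=0$ as well, so $h$ divides $\gcd\!\left(\frac{\partial f}{\partial x},\frac{\partial f}{\partial y}\right)$ and $\mu(f)=+\infty$ --- impossible, since a reduced plane curve singularity over a field of characteristic zero has finite Milnor number. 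Hence $(iii)$ holds and Proposition~\ref{Teissier2} gives $i_0(f,{\cal P}_l(f))=\mu(f)+i_0(f,l)-1$, all quantities now being finite. The only real point of care is this finiteness (coprimality of $f$ and ${\cal P}_l(f)$); everything else is simply the observation that the modular conditions of Proposition~\ref{Teissier2} are empty when $p=0$.
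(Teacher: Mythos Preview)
Your proposal is correct and follows exactly the approach the paper intends: the corollary is stated without proof right after Proposition~\ref{Teissier2}, the implicit argument being that in characteristic zero the congruence conditions $(i)$, $(ii)$, $(iii)$ are automatic. You simply spell out why --- in particular your verification of $(iii)$ via finiteness of $\mu(f)$ for a reduced series in characteristic zero is the one point that genuinely needs a word, and you handle it cleanly.
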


\begin{coro} Suppose that $p=\ch \bK> \ord(f)$ and let $i_0(f,l)=\ord (f)$. Then
 \[i_0({\cal P}_l(f),f)\leq \mu(f)+i_0(f,l)-1.\]
 \noindent The equality holds if and only if for any irreducible factor $h$ of 
 ${\cal P}_l(f)$  we get $i_0(f,h) \not\equiv 0$ \hbox{\rm(}$\mo p$\hbox{\rm)}. 
\end{coro}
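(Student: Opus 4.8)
The plan is to obtain this as an immediate consequence of Proposition~\ref{Teissier2}: under the present hypotheses the whole task reduces to verifying conditions~(i) and~(ii) of that proposition.

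First I would check~(i). Since $f$ is a singularity we have $\ord(f)\ge 1$, so the assumption $p>\ord(f)$ together with $i_0(f,l)=\ord(f)$ gives $0<i_0(f,l)=\ord(f)<p$; in particular $i_0(f,l)\not\equiv 0$ $(\mo p)$, which is~(i). Observe also that this same computation shows $i_0(f,l)=\ord(f)\not\equiv 0$ $(\mo p)$, so that Corollary~\ref{c:111} is applicable in what follows.

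Next I would use Corollary~\ref{c:111} to get~(ii). Part~(b) gives $\ord({\cal P}_l(f))=\ord(f)-1$, hence every irreducible factor $h$ of ${\cal P}_l(f)$ satisfies $1\le \ord(h)\le \ord(f)-1<p$. Part~(c) gives $i_0(l,h)=\ord(h)$ for every such $h$. Putting these together, $0<i_0(l,h)=\ord(h)<p$, so $i_0(l,h)\not\equiv 0$ $(\mo p)$, which is exactly condition~(ii).

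With~(i) and~(ii) in place, Proposition~\ref{Teissier2} applies and yields simultaneously the inequality $i_0({\cal P}_l(f),f)\le \mu(f)+i_0(f,l)-1$ and the characterization of equality: it holds if and only if condition~(iii) of the proposition is satisfied, that is, $i_0(f,h)\not\equiv 0$ $(\mo p)$ for every irreducible factor $h$ of ${\cal P}_l(f)$. There is no genuine obstacle here; the only point needing a little care is to keep track that all the relevant orders and intersection numbers lie strictly between $0$ and $p$, and the key input making this possible is the bound $\ord({\cal P}_l(f))=\ord(f)-1$ supplied by Corollary~\ref{c:111}(b).
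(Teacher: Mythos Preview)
Your proof is correct and follows essentially the same approach as the paper: you verify conditions~(i) and~(ii) of Proposition~\ref{Teissier2} using the bound $p>\ord(f)$ together with Corollary~\ref{c:111}, and then invoke that proposition for both the inequality and the equality characterization. The paper's argument is identical in structure, only slightly terser in combining parts~(b) and~(c) of Corollary~\ref{c:111} into a single chain $i_0(l,h)=\ord(h)\le\ord({\cal P}_l(f))=\ord(f)-1<p$.
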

\noindent \begin{proof}  If $\ord(f)<p$ then $i_0(f,l)=\ord(f)\not\equiv 0$ \hbox{\rm(}$\mo p$\hbox{\rm)} and by Corollary \ref{c:111} for any irreducible factor $h$ of ${\cal P}_l(f)$ we get 
\[
i_0(l,h)=\ord(h) \leq \ord({\cal P}_l(f))=\ord(f) -1<p.
\]

\noindent Hence $i_0(l,h)\not\equiv 0$ \hbox{\rm(}$\mo p$\hbox{\rm)} and the corollary follows from Proposition \ref{Teissier2}.
\end{proof}

\begin{ejemplo}
Let $f=x^{p+2}+y^{p+1}+x^{p+1}y$, where $p=\ch K>2$. Take $l=y$. Then $i_0(f,l)=p+2\not\equiv 0$ \hbox{\rm(}$\mo p$\hbox{\rm)}, ${\cal P}_l(f)=\frac{\partial f}{\partial x}=x^p(2x+y)$ and the irreducible factors of ${\cal P}_l(f)$ are $h_1=x$ and $h_2=2x+y$. Clearly $i_0(l,h_1)=i_0(l,h_2)=1\not\equiv 0$  \hbox{\rm(}$\mo p$\hbox{\rm)}. Moreover $i_0(f,h_1)=i_0(f,h_2)=p+1$ and all assumptions of Proposition \ref{Teissier2} are satisfied.

\medskip

\noindent Hence $i_0(f,{\cal P}_l(f))=\mu(f)+i_0(f,l)-1$ and $\mu(f)=i_0(f,{\cal P}_l(f))-i_0(f,l)+1=p(p+1)$. Note that $l=0$ is a curve of maximal contact with $f=0$. Let $l_1=x$. Then $i_0(f,l_1)=\ord(f)=p+1$, ${\cal P}_{l_1}(f)=-(y^p+x^{p+1})$ and $h=y^p+x^{p+1}$ is the only irreducible factor of the polar ${\cal P}_{l_1}(f)$. Since $i_0(l_1,h)=p$, the condition $(ii)$ of Proposition \ref{Teissier2} is not satisfied. However, $i_0(f,{\cal P}_{l_1}(f))=\mu(f)+i_0(f,l_1)-1,$ which we check directly.
\end{ejemplo}

\section{Tame singularities}

\noindent Assume that $f$ is a plane curve singularity.

\begin{prop}
\label{PPP}
Let $f=f_1\cdots f_r \in \bK[[x,y]]$ be a reduced power series, where $f_i$ is irreducible for $i=1,\ldots, r$. Suppose that there exists a regular parameter $l$ such that $i_0(f_i,l)\not\equiv 0$  \hbox{\rm(}$\mo p$\hbox{\rm)} for $i=1,\ldots, r$. Then $f$ is tame if and only if Teissier's lemma holds, that is if $i_0(f,{\cal P}_l(f))=\mu(f)+i_0(f,l)-1$.
\end{prop}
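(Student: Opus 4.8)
The plan is to deduce Proposition~\ref{PPP} directly from parts~(iii) and~(iv) of Proposition~\ref{pp:222}; no new geometric input is needed, only a careful matching of hypotheses. The key point is that the assumption of Proposition~\ref{PPP}---the existence of a regular parameter $l$ with $i_0(f_i,l)\not\equiv 0 \pmod{p}$ for every $i$---is precisely the hypothesis under which Proposition~\ref{pp:222}(iii) yields the identity
\[
i_0(f,{\cal P}_l(f))=\overline{\mu}(f)+i_0(f,l)-1 .
\]

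First I would observe that $i_0(f_i,l)\not\equiv 0 \pmod{p}$ forces in particular $i_0(f_i,l)<\infty$, hence $l$ does not divide $f_i$ for any $i$; consequently $l\nmid f$ and ${\cal P}_l(f)$ is well defined, so Proposition~\ref{pp:222}(iii) applies verbatim. Next, comparing the displayed identity with the statement of Teissier's lemma, $i_0(f,{\cal P}_l(f))=\mu(f)+i_0(f,l)-1$, and cancelling the common term $i_0(f,l)-1$ (which is finite, since $l\nmid f$), one sees that Teissier's lemma holds for $f$ with respect to $l$ if and only if $\overline{\mu}(f)=\mu(f)$. Finally, Proposition~\ref{pp:222}(iv) asserts that $\overline{\mu}(f)=\mu(f)$ if and only if $\mu(f)=2\delta(f)-r(f)+1$, that is, if and only if $f$ is tame. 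Chaining these two equivalences proves the proposition.

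The argument presents no genuine obstacle; the only points demanding attention are the finiteness and divisibility checks above, together with the remark that $i_0(f,l)=\sum_{i=1}^r i_0(f_i,l)$ may itself be divisible by $p$ even though each summand is not---this is harmless, since Proposition~\ref{pp:222}(iii) requires only the branchwise non-congruences $i_0(f_i,l)\not\equiv 0 \pmod{p}$. In effect Proposition~\ref{PPP} is a repackaging of Proposition~\ref{pp:222}(iii)--(iv), whose substantive content (the polar intersection formula, together with the Gorenstein identity $c(f)=2\delta(f)$ underlying~(iv)) has already been secured.
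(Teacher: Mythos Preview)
Your proof is correct and follows essentially the same route as the paper: apply Proposition~\ref{pp:222}(iii) to obtain $i_0(f,{\cal P}_l(f))=\overline{\mu}(f)+i_0(f,l)-1$, compare with Teissier's formula to reduce to $\overline{\mu}(f)=\mu(f)$, and conclude via Proposition~\ref{pp:222}(iv). Your additional remarks on finiteness and on the irrelevance of $i_0(f,l)\bmod p$ are sound clarifications but not needed for the argument.
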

\noindent \begin{proof}
By Proposition \ref{pp:222} $(iii)$ we have that $i_0(f,{\cal P}_l(f))=\overline{\mu}(f)+i_0(f,l)-1$. Thus $i_0(f,{\cal P}_l(f))=\mu(f)+i_0(f,l)-1$ if and only if $\mu(f)=\overline{\mu}(f)$. We finish the proof using Proposition \ref{pp:222} $(iv)$.
\end{proof}

\begin{prop}{\rm (Milnor \cite{Milnor}, Risler \cite{Risler}).}
If $\ch \bK=0$ then any plane singularity is tame.
\end{prop}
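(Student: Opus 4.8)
The plan is to deduce the statement directly from the two forms of Teissier's lemma already established together with Proposition~\ref{pp:222}, exploiting the fact that in characteristic zero every one of the ``$\not\equiv 0\ (\mo p)$'' hypotheses in Sections 1--3 is vacuously satisfied (for $p=0$ one reads $n\not\equiv 0\ (\mo p)$ as ``$n\neq 0$'', which holds for any intersection number of a singularity with a regular parameter not dividing it). So the only choice to make is that of a suitable regular parameter $l$.

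First I would fix a regular parameter $l$ that does not divide $f$; after a linear change of coordinates one may assume $\ord(l(0,y))=1$, which places us in the setting of Section~\ref{Teissier}. Such an $l$ exists because $f$, being a reduced singularity, has only finitely many branches, so a generic linear form is coprime to $f$. With this choice $i_0(f_i,l)\geq 1$ for every irreducible factor $f_i$ of $f$, hence $i_0(f_i,l)\not\equiv 0\ (\mo p)$ automatically, and the hypotheses of Proposition~\ref{pp:222}\emph{(iii)} and of Proposition~\ref{PPP} are met.

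Next I would invoke Corollary~\ref{car=0} (Teissier's lemma in characteristic zero), which gives $i_0(f,{\cal P}_l(f))=\mu(f)+i_0(f,l)-1$. On the other hand Proposition~\ref{pp:222}\emph{(iii)} yields $i_0(f,{\cal P}_l(f))=\overline{\mu}(f)+i_0(f,l)-1$. Comparing the two identities forces $\mu(f)=\overline{\mu}(f)$, and then Proposition~\ref{pp:222}\emph{(iv)} says precisely that $\mu(f)=2\delta(f)-r(f)+1$, i.e.\ $f$ is tame. Equivalently, one may package the last two steps as a single application of Proposition~\ref{PPP}: the existence of $l$ with $i_0(f_i,l)\not\equiv 0\ (\mo p)$ together with the validity of Teissier's equality (Corollary~\ref{car=0}) gives tameness at once.

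There is essentially no hard step: the content of the argument is that in characteristic zero the obstructions measured by the congruence conditions disappear, so the characteristic-$p$ machinery of Sections~1--3 collapses to the classical Milnor--Risler statement. The only point requiring care is the bookkeeping around the $\mo p$ conditions and the harmless reduction to $\ord(l(0,y))=1$; I do not anticipate any genuine obstacle.
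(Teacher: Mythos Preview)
Your proposal is correct and follows essentially the same route as the paper: invoke Corollary~\ref{car=0} to get Teissier's equality in characteristic zero, then apply Proposition~\ref{PPP} (or equivalently compare with Proposition~\ref{pp:222}\emph{(iii)} and conclude via \emph{(iv)}). The paper's proof is a two-line version of what you wrote; your additional remarks about choosing $l$ and the vacuity of the $\mo p$ conditions are accurate elaborations but not new ideas.
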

\noindent \begin{proof}
Teissier's Lemma holds by Corollary \ref{car=0} . Use Proposition \ref{PPP}.
\end{proof}

\begin{prop} 
\label{QQQ} Let $p=\ch \bK>0$. Suppose that $p>\ord(f)$. Let $l$ be a regular parameter such that $i_0(f,l)=\ord(f)$. Then $f$ is tame if and only if for any irreducible factor $h$ of ${\cal P}_l(f)$ we get $i_0(f,h)\not\equiv 0$  \hbox{\rm(}$\mo p$\hbox{\rm)}.
\end{prop}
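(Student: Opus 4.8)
The plan is to deduce the statement from the two main results of this section, Proposition \ref{PPP} and Proposition \ref{Teissier2} (in the form of its Corollary for the case $p>\ord(f)$), using only the trivial observation that a positive integer strictly less than $p$ is not divisible by $p$.

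First I would check that the hypotheses of Proposition \ref{PPP} are satisfied. Write $f=f_1\cdots f_r$ with each $f_i$ irreducible. Since $i_0(f,l)=\ord(f)<p$ is finite, $l$ does not divide $f$, so ${\cal P}_l(f)$ is defined; and since $\sum_{i=1}^r i_0(f_i,l)=i_0(f,l)=\ord(f)<p$ with each summand $i_0(f_i,l)\geq\ord(f_i)\geq 1$, we get $0<i_0(f_i,l)<p$, hence $i_0(f_i,l)\not\equiv 0\pmod{p}$ for every $i$. Thus Proposition \ref{PPP} applies: $f$ is tame if and only if Teissier's equality $i_0(f,{\cal P}_l(f))=\mu(f)+i_0(f,l)-1$ holds.

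Next I would verify the hypotheses of Proposition \ref{Teissier2}. Condition $(i)$, $i_0(f,l)\not\equiv 0\pmod{p}$, is immediate. For condition $(ii)$, recall that $i_0(f,l)=\ord(f)\not\equiv 0\pmod{p}$, so Corollary \ref{c:111} gives $i_0(l,h)=\ord(h)\leq\ord({\cal P}_l(f))=\ord(f)-1<p$ for every irreducible factor $h$ of ${\cal P}_l(f)$; in particular $0<i_0(l,h)<p$, so $i_0(l,h)\not\equiv 0\pmod{p}$. Hence Proposition \ref{Teissier2} yields $i_0(f,{\cal P}_l(f))\leq\mu(f)+i_0(f,l)-1$, with equality if and only if $i_0(f,h)\not\equiv 0\pmod{p}$ for every irreducible factor $h$ of ${\cal P}_l(f)$.

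Combining the two steps gives precisely the claimed equivalence. I do not expect any genuine obstacle here: the substance is already encapsulated in Proposition \ref{PPP} and in the Teissier-type inequality, and the hypothesis $p>\ord(f)$ serves only to force every relevant intersection multiplicity — those of the $f_i$ with $l$ and those of $l$ with the irreducible factors of ${\cal P}_l(f)$ — into the range $(0,p)$, so that the divisibility conditions $(i)$ and $(ii)$ become automatic and the condition on $i_0(f,h)$ is the only one that can fail.
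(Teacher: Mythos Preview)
Your proof is correct and follows essentially the same route as the paper: verify condition $(i)$ of Proposition~\ref{Teissier2} from $i_0(f,l)=\ord(f)<p$, use Corollary~\ref{c:111} to verify condition $(ii)$, then invoke Proposition~\ref{PPP}. Your explicit check that $i_0(f_i,l)\not\equiv 0\pmod{p}$ for each irreducible factor $f_i$ of $f$ --- needed to apply Proposition~\ref{PPP} --- is in fact a detail the paper's proof passes over in silence, so your version is slightly more complete.
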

\noindent \begin{proof}  Take a regular parameter $l$  such that $i_0(f,l)=\ord(f)$. By hypothesis we get $i_0(f,l)<p$ so $i_0(f,l)\not\equiv 0$  \hbox{\rm(}$\mo p$\hbox{\rm)}. By Corollary \ref{c:111} the assumption $(ii)$ of Proposition \ref{Teissier2} is satisfied.

\medskip

\noindent Hence $i_0(f, {\cal P}_l(f))\leq \mu(f)+i_0(f,l)-1$ with equality if and only if $i_0(f,h)\not\equiv 0$  \hbox{\rm(}$\mo p$\hbox{\rm)} for any irreducible factor $h$ of ${\cal P}_l(f)$. Use Proposition \ref{PPP}.
\end{proof}

\begin{prop}{\rm (Nguyen \cite{Nguyen}).}
\label{Ngu}
Let $p=\ch \bK>0$. Suppose that there exists a regular parameter $l$ such that $i_0(f,l)=\ord(f)$ and $i_0(f,{\cal P}_l(f))<p$. Then $f$ is tame.
\end{prop}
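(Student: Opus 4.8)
The plan is to deduce this from Proposition \ref{QQQ}; the only step that requires an argument is the verification that $p>\ord(f)$, after which the equality criterion of Proposition \ref{QQQ} will be automatic.

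First I would record what the hypothesis $i_0(f,{\cal P}_l(f))<p$ already gives: this intersection number is finite, so ${\cal P}_l(f)\neq 0$ and $f$ and ${\cal P}_l(f)$ have no common factor, and in particular it makes sense to speak of its irreducible factors. Next, since $f$ is a plane curve singularity we have $\ord(f)\geq 2$, so both $\frac{\partial f}{\partial x}$ and $\frac{\partial f}{\partial y}$ vanish at the origin; hence ${\cal P}_l(f)=\frac{\partial f}{\partial x}\frac{\partial l}{\partial y}-\frac{\partial f}{\partial y}\frac{\partial l}{\partial x}$ vanishes at the origin, that is, $\ord({\cal P}_l(f))\geq 1$. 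Using the standard inequality $i_0(g,h)\geq\ord(g)\ord(h)$ I would then obtain
\[
p> i_0(f,{\cal P}_l(f))\geq \ord(f)\cdot\ord({\cal P}_l(f))\geq \ord(f),
\]
so that $p>\ord(f)$. Combined with the hypothesis $i_0(f,l)=\ord(f)$, this shows that all the assumptions of Proposition \ref{QQQ} are met for the given regular parameter $l$.

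It then remains to verify the criterion of Proposition \ref{QQQ}: that for every irreducible factor $h$ of ${\cal P}_l(f)$ one has $i_0(f,h)\not\equiv 0$ $(\mo p)$. Writing ${\cal P}_l(f)=h\cdot g$ and using additivity of $i_0(f,-)$ in the second argument, $i_0(f,{\cal P}_l(f))=i_0(f,h)+i_0(f,g)\geq i_0(f,h)$; on the other hand $(f,h)$ is a proper ideal of $\bK[[x,y]]$ (both $f$ and $h$ are non-units), so $i_0(f,h)\geq 1$. Hence $0<i_0(f,h)\leq i_0(f,{\cal P}_l(f))<p$, which forces $i_0(f,h)\not\equiv 0$ $(\mo p)$. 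Since this holds for every irreducible factor $h$, Proposition \ref{QQQ} yields that $f$ is tame. I expect the only genuinely non-trivial point to be the passage to the bound $p>\ord(f)$; everything else is routine bookkeeping with intersection multiplicities.
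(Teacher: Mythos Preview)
Your argument is correct and follows the same route as the paper's proof: first bound $p>\ord(f)$ via $i_0(f,{\cal P}_l(f))\geq \ord(f)\cdot\ord({\cal P}_l(f))$, then invoke Proposition~\ref{QQQ} after checking that each irreducible factor $h$ of ${\cal P}_l(f)$ satisfies $i_0(f,h)<p$. You actually supply a couple of details the paper leaves implicit, namely why $\ord({\cal P}_l(f))\geq 1$ and why $i_0(f,h)\leq i_0(f,{\cal P}_l(f))$.
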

\noindent \begin{proof}
We have $p>i_0(f,{\cal P}_l(f))\geq \ord(f) \cdot \ord({\cal P}_l(f)) $. Hence $p>\ord(f)$ and we may apply Proposition \ref{QQQ}. Since $i_0(f,{\cal P}_l(f))<p$ for any irreducible factor $h$ of ${\cal P}_l(f)$ we have that $i_0(f,h)<p$ and obviously $i_0(f,h)\not\equiv 0$  \hbox{\rm(}$\mo p$\hbox{\rm)}. The proposition follows from Proposition \ref{QQQ}.
\end{proof}

\begin{teorema}{\rm(Nguyen \cite{Nguyen}).}
If $p>\mu(f)+\ord(f)-1$ then $f$ is tame.
\end{teorema}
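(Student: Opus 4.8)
The plan is to reduce the statement to Proposition~\ref{Ngu}: it suffices to produce a regular parameter $l$ with $i_0(f,l)=\ord(f)$ and $i_0(f,{\cal P}_l(f))<p$.

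First I would dispose of the degenerate case $\mu(f)=+\infty$: then $p>\mu(f)+\ord(f)-1$ cannot hold, and there is nothing to prove. So assume $\mu(f)<+\infty$; then $f$ is reduced and $\frac{\partial f}{\partial x},\frac{\partial f}{\partial y}$ have no common factor, so that $\mu(f)=i_0\!\left(\frac{\partial f}{\partial x},\frac{\partial f}{\partial y}\right)\geq\ord\!\left(\frac{\partial f}{\partial x}\right)\ord\!\left(\frac{\partial f}{\partial y}\right)\geq(\ord(f)-1)^2$. Since $f$ is a singularity, $\ord(f)\geq 2$, and hence
\[
p>\mu(f)+\ord(f)-1\geq(\ord(f)-1)^2+\ord(f)-1=\ord(f)\,(\ord(f)-1)\geq\ord(f).
\]
Thus $\ord(f)<p$; in particular $\ord(f)\not\equiv 0\pmod{p}$.

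Next I would choose a transversal regular parameter $l$, i.e.\ one with $i_0(f,l)=\ord(f)$; a generic linear form works since $\bK$ is infinite, and such an $l$ cannot divide $f$ because $\ord(l)=1<\ord(f)$. Now $i_0(f,l)=\ord(f)\not\equiv 0\pmod{p}$, so Corollary~\ref{c:111} gives $\ord({\cal P}_l(f))=\ord(f)-1$ and, for every irreducible factor $h$ of ${\cal P}_l(f)$, $i_0(l,h)=\ord(h)\leq\ord({\cal P}_l(f))=\ord(f)-1<p$, whence $i_0(l,h)\not\equiv 0\pmod{p}$. Therefore hypotheses (i) and (ii) of Proposition~\ref{Teissier2} hold, and that proposition yields
\[
i_0(f,{\cal P}_l(f))\leq\mu(f)+i_0(f,l)-1=\mu(f)+\ord(f)-1<p.
\]
With this $l$ both requirements of Proposition~\ref{Ngu} are met, so $f$ is tame.

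I do not expect a real obstacle: the argument is a short concatenation of the results already established, the only substantive point being that transversality of $l$ together with $\ord(f)<p$ automatically secures conditions (i)--(ii) of Teissier's lemma via Corollary~\ref{c:111}. The two things needing a little care are the edge case $\mu(f)=+\infty$ (handled above) and the elementary passage from the hypothesis to $\ord(f)<p$. If one prefers, Proposition~\ref{Ngu} can be replaced by Proposition~\ref{QQQ}, since $0<i_0(f,h)\leq i_0(f,{\cal P}_l(f))<p$ for every irreducible factor $h$ of ${\cal P}_l(f)$, so $i_0(f,h)\not\equiv 0\pmod{p}$.
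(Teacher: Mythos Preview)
Your proof is correct and follows essentially the same route as the paper's: choose a transversal regular parameter $l$, use Corollary~\ref{c:111} to verify the hypotheses of Proposition~\ref{Teissier2} (this is exactly what the paper packages as ``the first part of the proof of Proposition~\ref{QQQ}''), deduce $i_0(f,{\cal P}_l(f))\leq\mu(f)+\ord(f)-1<p$, and conclude via Proposition~\ref{Ngu}. The only cosmetic difference is that the paper gets $p>\ord(f)$ more directly from the trivial bound $\mu(f)\geq 1$ (valid for any singularity) rather than your sharper but unnecessary estimate $\mu(f)\geq(\ord(f)-1)^2$.
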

\noindent \begin{proof}
Since $f$ is a singularity we get $\mu(f)>0$ and by hypothesis the characteristic of the field verifies $p>\mu(f)-1+\ord(f)\geq \ord(f)$. By the first part of the proof of Proposition \ref{QQQ} we have $i_0(f, {\cal P}_l(f))\leq \mu(f)+\ord(f)-1$, where $l$ is a regular parameter such that $i_0(f,l)=\ord(f)$. Hence $i_0(f, {\cal P}_l(f))<p$ and the theorem follows from Proposition \ref{Ngu}.
\end{proof}

\section{The Milnor number of plane irreducible singularities}

\noindent Let $f	\in \bK[[x,y]]$ be an irreducible power series of order $n=\ord(f)$ and let  $\Gamma(f)$ be the semigroup associated with  $f=0$. 
\medskip

\noindent Let $\overline{\beta_0},\ldots,\overline{\beta_g}$ be the minimal
sequence of generators of $\Gamma(f)$ defined by the conditions
\begin{itemize}
\item $\overline{\beta_0}=\min (\Gamma(f)\backslash\{0\})=\ord(f)=n$,
\item $\overline{\beta_k}=\min (\Gamma(f)\backslash \bN \overline{\beta_0}+\cdots+\bN \overline{\beta_{k-1}})$ for $k\in\{1,\ldots,g\},$
\item $\Gamma(f)=\bN \overline{\beta_0}+\cdots+\bN \overline{\beta_{g}}$.
\end{itemize}

\noindent Let $e_k=\gcd(\overline{\beta_0},\ldots,\overline{\beta_k})$ for $k\in\{1,\ldots,g\}.$ Then $n=e_0>e_1>\cdots e_{g-1}>e_g=1$. Let $n_k=e_{k-1}/e_k$ for 
$k\in\{1,\ldots,g\}.$ We have $n_k>1$ for $k\in\{1,\ldots,g\}$ and $n=n_1\cdots n_g$. Let $n^*=\max(n_1,\ldots,n_g)$. Then $n^*\leq n$ with equality if and only if $g=1$.

\medskip

\noindent The following theorem is a sharpened version of the main result of \cite{GB-P2016}.

\begin{teorema}
\label{main}
Let $f\in \bK[[x,y]]$ be an irreducible power series of order $n>1$ and let $\overline{\beta_0},\ldots,\overline{\beta_g}$ be the minimal
system of generators of $\Gamma(f)$. Suppose that $p=\ch \bK>n^*$. Then the following two conditions are equivalent:
\begin{enumerate}
\item[(i)]$\overline{\beta_k}\not\equiv 0$ \hbox{\rm(}$\mo p$\hbox{\rm)} for $k\in\{1,\ldots,g\},$
\item [(ii)]$f$ is tame.
\end{enumerate}
\end{teorema}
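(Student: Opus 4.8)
The plan is to combine Teissier's Lemma in characteristic $p$ (Proposition \ref{Teissier2}) with the identity of Proposition \ref{pp:222}\emph{(iii)} and with a description of the irreducible factors of the generic polar.

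First I would normalise the coordinates. A linear change of variables alters none of the invariants involved ($\mu(f)$, $\overline{\mu}(f)$, $\Gamma(f)$ and hence $\overline{\beta_0},\dots,\overline{\beta_g}$, $e_k$, $n_k$, $n^*$), so I may assume that $l=y$ is transverse to $f$, i.e. $i_0(f,l)=\ord(f)=n$ and $\ord(l(0,y))=1$. Since $p>n^*=\max(n_1,\dots,n_g)$ and each $n_k>1$, every $n_k$ is prime to $p$; hence $n=n_1\cdots n_g$ is prime to $p$, and so is every $e_k$ because $e_k\mid n$. In particular $i_0(f,l)=n\not\equiv 0$ $(\mo p)$, so Proposition \ref{pp:222}\emph{(iii)} gives $i_0(f,{\cal P}_l(f))=\overline{\mu}(f)+n-1$, while Corollary \ref{c:111} gives $\ord({\cal P}_l(f))=n-1$ and $i_0(l,h)=\ord(h)$ for every irreducible factor $h$ of ${\cal P}_l(f)$.

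The heart of the matter is a characteristic-$p$ version of Merle's factorization of the polar: when $p>n^*$ one should have ${\cal P}_l(f)={\cal P}_1\cdots{\cal P}_g$ with each ${\cal P}_k$ a non-unit, every irreducible factor $h$ of ${\cal P}_k$ satisfying $i_0(f,h)/\ord(h)=\overline{\beta_k}/e_k$, and $\ord(h)$ prime to $p$ (the orders of the polar branches being products of the $n_1,\dots,n_g$, exactly as in the classical case). This generalises the corresponding result of \cite{GB-P2016}, and it is the step requiring genuine work: the bound $p>n^*$ is precisely what stops the polar from degenerating when the approximate-root/Puiseux computations underlying Merle's theorem are carried out in characteristic $p$. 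Granting it, $i_0(l,h)=\ord(h)\not\equiv 0$ $(\mo p)$ for every irreducible factor $h$ of ${\cal P}_l(f)$, so hypotheses \emph{(i)} and \emph{(ii)} of Proposition \ref{Teissier2} hold. Comparing its conclusion with the equality $i_0(f,{\cal P}_l(f))=\overline{\mu}(f)+n-1$ above yields $\overline{\mu}(f)\le\mu(f)$, with equality if and only if $i_0(f,h)\not\equiv 0$ $(\mo p)$ for every irreducible factor $h$ of ${\cal P}_l(f)$; by Proposition \ref{pp:222}\emph{(iv)} (or by Proposition \ref{PPP}) the latter is exactly the condition that $f$ be tame.

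It then remains to identify ``$i_0(f,h)\not\equiv 0$ $(\mo p)$ for all $h$'' with condition \emph{(i)}. Put $b_k=\overline{\beta_k}/e_k\in\bN$, so that $i_0(f,h)=b_k\ord(h)$ for every factor $h$ of ${\cal P}_k$; since $p\nmid e_k$, we have $p\mid\overline{\beta_k}$ if and only if $p\mid b_k$. If $p\mid\overline{\beta_k}$ for some $k$, then $p\mid b_k\mid i_0(f,h)$ for any factor $h$ of the non-unit ${\cal P}_k$, so the condition fails; conversely, if $p\nmid\overline{\beta_k}$ for all $k$, then both $b_k$ and $\ord(h)$ are prime to $p$, hence so is $i_0(f,h)=b_k\ord(h)$, for every factor $h$ of every ${\cal P}_k$. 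Thus $f$ is tame if and only if \emph{(i)} holds. The only serious obstacle I foresee is the polar-factorization step of the third paragraph — and within it, the claim that the orders of the polar branches remain prime to $p$ under the hypothesis $p>n^*$; the rest is bookkeeping with the results of Sections 1--3.
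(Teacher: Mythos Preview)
Your overall architecture matches the paper's exactly: normalise so that $l=y$ is transverse, deduce $n\not\equiv 0$ $(\mo p)$ from $p>n^*$, feed Merle's polar factorization into Proposition~\ref{Teissier2}, and then invoke Proposition~\ref{PPP}. But you have mislocated the one nontrivial step. The polar factorization you call ``the heart of the matter'' and say ``generalises the corresponding result of \cite{GB-P2016}'' needs no generalisation at all: Theorem~\ref{decomposition} is already stated under the sole hypothesis $n\not\equiv 0$ $(\mo p)$, which you have verified. What you flag as the obstacle (``the bound $p>n^*$ is precisely what stops the polar from degenerating'') is therefore not where $p>n^*$ enters.

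Where it actually enters is in the step you dismiss as bookkeeping. Theorem~\ref{decomposition}(b2) gives $\ord(h)=m_k\cdot\dfrac{n}{e_{k-1}}$ for some integer $m_k\ge 1$, and since $h$ divides $h_k$ one has $m_k\cdot\dfrac{n}{e_{k-1}}\le\ord(h_k)=\dfrac{n}{e_k}-\dfrac{n}{e_{k-1}}=(n_k-1)\dfrac{n}{e_{k-1}}$, whence $m_k\le n_k-1<n_k\le n^*<p$. This elementary bound is the whole point of the hypothesis $p>n^*$: it forces $m_k\not\equiv 0$ $(\mo p)$, hence $\ord(h)=m_k\cdot n/e_{k-1}\not\equiv 0$ $(\mo p)$ (since $n/e_{k-1}$ divides $n$). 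Your description ``the orders of the polar branches being products of the $n_1,\dots,n_g$'' is not quite right and would not yield this.

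One more correction: the contact ratio in Theorem~\ref{decomposition}(b1) is $i_0(f,h)/\ord(h)=e_{k-1}\overline{\beta_k}/n$, not $\overline{\beta_k}/e_k$ as you wrote (these differ when $g>1$). With the correct formula one gets $i_0(f,h)=m_k\overline{\beta_k}$, and since $m_k\not\equiv 0$ $(\mo p)$ the equivalence with condition~\emph{(i)} follows as you argue.
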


\noindent In \cite{GB-P2016} the equivalence of $(i)$ and $(ii)$ is proved under the assumption that $p>n$.

\medskip

\noindent If $f\in \bK[[x,y]]$ is an irreducible power series then we get $\ord(f(x,0))=\ord(f)$ or $\ord(f(0,y))=\ord(f)$. In the sequel we assume that $\ord(f(0,y))=\ord(f)=n$. The proof of Theorem \ref{main} is based on Merle's factorization theorem:

\begin{teorema}{\rm (Merle \cite{Merle}, Garc\'{\i}a Barroso-P\l oski \cite{GB-P2016}).}\\
\label{decomposition}
Suppose that $\ord(f(0,y))=\ord(f)=n\not \equiv 0$ \hbox{\rm(}$\mo p$\hbox{\rm)}. Then $\frac{\partial f}{\partial y}=h_1\cdots h_g$ in $\bK[[x,y]],$ where

\begin{enumerate}
\item[(a)] $\ord(h_k)=\frac{n}{e_k}-\frac{n}{e_{k-1}}$ for $k\in \{1,\ldots,g\}$.
\item[(b)] If $h\in \bK[[x,y]]$ is an irreducible factor of $h_k$, $k\in \{1,\ldots,g\}$, then 
\begin{enumerate}
\item[(b1)] $\frac{i_0(f,h)}{\ord(h)}=\frac{e_{k-1}\overline{\beta_k}}{n},$
\noindent and 
\item[(b2)] $\ord(h)\equiv 0$  $\left(mod \;\frac{n}{e_{k-1}}\right)$.
\end{enumerate}
\end{enumerate}
\end{teorema}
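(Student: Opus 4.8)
The plan is to reproduce Merle's classical argument and to check that, under the single hypothesis $n\not\equiv 0$ ($\mo p$), every counting and divisibility step it uses remains valid. Since $\ord(f(0,y))=\ord(f)=n$, the first move is to apply Corollary \ref{c:111} with the regular parameter $l=x$. Indeed $i_0(f,x)=\ord(f(0,y))=n\not\equiv 0$ ($\mo p$) and ${\cal P}_x(f)=-\frac{\partial f}{\partial y}$, so the corollary yields at once that $\ord\!\left(\frac{\partial f}{\partial y}\right)=n-1$, that $i_0\!\left(x,\frac{\partial f}{\partial y}\right)=n-1$, and that every irreducible factor $h$ of $\frac{\partial f}{\partial y}$ satisfies $\ord(h)=i_0(x,h)$. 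In particular the orders of the factors sum to $n-1=\sum_{k=1}^g\left(\frac{n}{e_k}-\frac{n}{e_{k-1}}\right)$, the total predicted by $(a)$; the real content of the theorem is to distribute this sum into $g$ packets and to identify the contact of each factor with $f$.

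The second ingredient is a conjugate parametrization, which exists because $n\not\equiv 0$ ($\mo p$): the field $\bK$ contains $n$ distinct $n$-th roots of unity and $f=0$ admits a good parametrization $\phi(t)=(t^n,y(t))$, whose $n$ conjugates are $(t^n,y(\epsilon t))$ with $\epsilon^n=1$, giving the Weierstrass factorization $f(t^n,Y)=(\hbox{\rm unit})\prod_{\epsilon^n=1}(Y-y(\epsilon t))$. Writing $\beta_1<\cdots<\beta_g$ for the characteristic exponents of $y(t)$ (so that $\gcd(n,\beta_1,\ldots,\beta_k)=e_k$), a direct computation shows $\ord_t(y(t)-y(\epsilon t))=\beta_k$ exactly when $\epsilon^{e_{k-1}}=1$ and $\epsilon^{e_k}\neq 1$. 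Because $e_k\not\equiv 0$ ($\mo p$) the group of $e_k$-th roots of unity has full order $e_k$, so these contacts are governed by the integers $e_{k-1}-e_k$. Together with the formula $i_0(f,h)=\ord_t(h(\phi(t)))$, this conjugate calculus is the tool that evaluates all the intersection numbers appearing below; it is exactly here that $n\not\equiv 0$ ($\mo p$) is indispensable, and it is the reason no congruence on the $\overline{\beta_k}$ is needed.

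To pass from these numbers to an honest factorization I would argue by induction on $g$ using approximate roots. The semiroots $f_{(1)},\ldots,f_{(g-1)}$ of $f$, of $y$-degrees $n/e_1,\ldots,n/e_{g-1}$, exist in characteristic $p$ because each $e_k$ divides $n$ and is therefore prime to $p$, so the Tschirnhausen construction can be carried out. Viewing $\frac{\partial f}{\partial y}(t^n,Y)$ as a polynomial of degree $n-1$ in $Y$, its roots are the Puiseux data of the polar branches; analysing the Newton polygon of $\frac{\partial f}{\partial y}$ relative to the top semiroot isolates the factor $h_g$ collecting the branches of maximal contact $\beta_g$ with $f$, and reduces the assertion to a branch with $g-1$ characteristic exponents. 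At each stage the conjugate calculus of the previous paragraph evaluates, for a branch $h$ in the $k$-th packet, the polar quotient $i_0(f,h)/\ord(h)=e_{k-1}\overline{\beta_k}/n$, which is $(b1)$, while counting the orders inside the packet gives $\ord(h_k)=\frac{n}{e_k}-\frac{n}{e_{k-1}}$, which is $(a)$.

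Finally $(b2)$ follows from the ramification structure: a polar branch $h$ belonging to the $k$-th packet agrees with the truncation of $y(t)$ up to the exponent $\beta_{k-1}$, so its parametrization factors through the $e_{k-1}$-level of $f$, whose accumulated ramification index is $n/e_{k-1}$ (the $y$-degree of $f_{(k-1)}$); hence $\ord(h)\equiv 0$ $\left(\mo \frac{n}{e_{k-1}}\right)$. I expect the main obstacle to be precisely this passage from the numerical contact data of the second step to a genuine factorization in $\bK[[x,y]]$ — grouping the polar roots into Galois-stable packets that descend to power-series factors $h_k$ and establishing the divisibility $(b2)$. Every arithmetic fact this requires — the $n$ distinct roots of unity, the invertibility of the ramification indices $e_k$, and the existence of the approximate roots — is supplied by $n\not\equiv 0$ ($\mo p$), so once they are in place Merle's characteristic-zero argument transfers with no further restriction on $p$.
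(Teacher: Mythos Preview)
The paper does not prove this theorem. It is stated with attribution to Merle \cite{Merle} for characteristic zero and to the authors' own earlier paper \cite{GB-P2016} for arbitrary characteristic, and is then used as a black box in the proof of Lemma \ref{mu}. There is therefore no ``paper's own proof'' to compare your proposal against here.

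That said, your outline is a faithful sketch of how Merle's argument is transported to characteristic $p$: the hypothesis $n\not\equiv 0$ ($\mo p$) guarantees a full set of $n$-th roots of unity, hence a Puiseux-type parametrization with $n$ distinct conjugates, and all the $e_k$ are units so the usual counting of contacts $\ord_t(y(t)-y(\epsilon t))$ by level sets $\{\epsilon^{e_{k-1}}=1,\ \epsilon^{e_k}\neq1\}$ goes through unchanged. One caution: in \cite{GB-P2015,GB-P2016} the authors work with semiroots/key polynomials rather than literal Puiseux roots, and the packet decomposition of $\partial f/\partial y$ is obtained there by analysing intersection numbers $i_0(f,h)$ directly via the semigroup, not by an induction on $g$ with approximate roots and Newton polygons as you describe. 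Your route is the more ``geometric'' one and is closer to Merle's original; it should work, but the step you yourself flag as the obstacle --- showing that the roots of $\partial f/\partial y(t^n,Y)$ group into $\mathrm{Gal}$-stable packets that descend to honest factors $h_k\in\bK[[x,y]]$ --- is exactly where you will need to be careful, and it is the step that the semigroup/semiroot formulation in \cite{GB-P2016} handles more cleanly.
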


\begin{lema} 
\label{mu}
Suppose that $p>n^*$. Then $i_0\left(f, \frac{\partial f}{\partial y}\right)\leq \mu(f)+\ord (f)-1$ with equality if and only if $\overline{\beta_k}\not\equiv 0$ \hbox{\rm(}$\mo p$\hbox{\rm)} for $k\in \{0, \ldots, g\}$.
\end{lema}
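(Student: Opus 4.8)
The plan is to evaluate $i_0\bigl(f,\frac{\partial f}{\partial y}\bigr)$ branch by branch with the help of Merle's factorization theorem (Theorem \ref{decomposition}) and then to compare the result with $\mu(f)=i_0\bigl(\frac{\partial f}{\partial x},\frac{\partial f}{\partial y}\bigr)$ by means of the parametrization device already used in the proofs of Lemma \ref{Le:1} and Proposition \ref{Teissier2}. We keep the normalization $\ord(f(0,y))=\ord(f)=n$ fixed before Theorem \ref{decomposition}. Since $p$ is prime and $p>n^*=\max(n_1,\dots,n_g)$, we have $p\nmid n_k$ for every $k$, hence $p\nmid n=n_1\cdots n_g=\overline{\beta_0}$. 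Consequently $\ord\bigl(\frac{\partial f}{\partial y}\bigr)=n-1$ (the term $\partial_y(cy^n)=ncy^{n-1}$ does not vanish, where $f(0,y)=cy^n+\cdots$), and $\frac{\partial f}{\partial y}$ is again transverse; therefore every irreducible factor $h$ of $\frac{\partial f}{\partial y}$ satisfies $\ord(h(0,y))=\ord(h)$, i.e. $i_0(x,h)=\ord(h)$.

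First I would record the numerical data coming from Theorem \ref{decomposition}. Write $\frac{\partial f}{\partial y}=h_1\cdots h_g$. If $h$ is an irreducible factor of $h_k$, then by (b2) we may write $\ord(h)=\frac{n}{e_{k-1}}\,m_h$ with $m_h\in\bN$, $m_h\geq 1$; summing over the irreducible factors of $h_k$ and using (a) gives $\frac{n}{e_{k-1}}\sum_{h\mid h_k}m_h=\ord(h_k)=\frac{n}{e_k}-\frac{n}{e_{k-1}}=\frac{n}{e_{k-1}}(n_k-1)$, so $\sum_{h\mid h_k}m_h=n_k-1$ and hence $1\leq m_h\leq n_k-1\leq n^*-1<p$; in particular $p\nmid m_h$. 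Combining with (b1), $i_0(f,h)=\frac{e_{k-1}\overline{\beta_k}}{n}\,\ord(h)=\overline{\beta_k}\,m_h$, whence
\[
i_0\Bigl(f,\tfrac{\partial f}{\partial y}\Bigr)=\sum_{k=1}^{g}\,\sum_{h\mid h_k}\overline{\beta_k}\,m_h=\sum_{k=1}^{g}(n_k-1)\,\overline{\beta_k}.
\]
Moreover $\ord(h)=\frac{n}{e_{k-1}}m_h=(n_1\cdots n_{k-1})\,m_h$ is a product of integers all $<p$, so $p\nmid\ord(h)$ as well.

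Next comes the comparison with $\mu(f)$. Fix an irreducible factor $h$ of $\frac{\partial f}{\partial y}$ and a good parametrization $\psi(t)=(\psi_1(t),\psi_2(t))$ of $h=0$; then $\ord(\psi_1)=i_0(x,h)=\ord(h)$, and since $p\nmid\ord(h)$ we get $\ord(\psi_1')=\ord(h)-1$. Differentiating $f(\psi(t))$ and using $\frac{\partial f}{\partial y}(\psi(t))=0$ yields
\[
\tfrac{d}{dt}f(\psi(t))=\tfrac{\partial f}{\partial x}(\psi(t))\,\psi_1'(t).
\]
Assuming for the moment that $h$ does not divide $\frac{\partial f}{\partial x}$ (otherwise $\mu(f)=\infty$ and the inequality is trivial; see the last paragraph), taking orders gives $i_0\bigl(\frac{\partial f}{\partial x},h\bigr)=\ord\bigl(\frac{d}{dt}f(\psi(t))\bigr)-\ord(h)+1\geq i_0(f,h)-\ord(h)$, with equality if and only if $\ord\bigl(\frac{d}{dt}f(\psi(t))\bigr)=\ord(f(\psi(t)))-1$, that is, if and only if $p\nmid i_0(f,h)=\overline{\beta_k}\,m_h$, equivalently (as $p\nmid m_h$) if and only if $p\nmid\overline{\beta_k}$. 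Summing over the irreducible factors of $\frac{\partial f}{\partial y}$ counted with multiplicity and using $\sum_h\ord(h)=\ord\bigl(\frac{\partial f}{\partial y}\bigr)=n-1$, we obtain
\[
\mu(f)=i_0\Bigl(\tfrac{\partial f}{\partial x},\tfrac{\partial f}{\partial y}\Bigr)\ \geq\ i_0\Bigl(f,\tfrac{\partial f}{\partial y}\Bigr)-(n-1),
\]
which is the asserted bound $i_0\bigl(f,\frac{\partial f}{\partial y}\bigr)\leq\mu(f)+\ord(f)-1$.

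Finally, equality holds if and only if it holds on every branch $h$, i.e. if and only if $p\nmid\overline{\beta_k}$ for all $k\in\{1,\dots,g\}$ (each $h_k$ is a non-unit since $n_k>1$, hence has at least one irreducible factor); adjoining the relation $p\nmid\overline{\beta_0}=n$, which holds automatically, this is precisely $\overline{\beta_k}\not\equiv 0\ (\mo p)$ for $k\in\{0,\dots,g\}$. It remains to dispose of the degenerate case in which some irreducible factor $h$ of $\frac{\partial f}{\partial y}$ divides $\frac{\partial f}{\partial x}$: then $\mu(f)=\infty$, so the inequality is trivial, and $\frac{d}{dt}f(\psi(t))=\frac{\partial f}{\partial x}(\psi(t))\psi_1'(t)=0$ forces $f(\psi(t))\in\bK[[t^p]]$ (note $f(\psi(t))\neq 0$, since $\ord(h)\leq n-1<n=\ord(f)$ and $f$ is irreducible), hence $p\mid i_0(f,h)=\overline{\beta_k}m_h$ and therefore $p\mid\overline{\beta_k}$; thus both sides of the claimed equivalence fail. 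The delicate point, and the one I expect to demand the most care, is the equality analysis: one must verify that under the hypothesis $p>n^*$ the only obstruction to equality along a branch $h\mid h_k$ is $p\mid\overline{\beta_k}$, which rests on the congruence $\ord(h)\equiv 0\ (\mo\,\tfrac{n}{e_{k-1}})$ from Theorem \ref{decomposition}(b2) together with the bound $m_h\leq n_k-1<p$.
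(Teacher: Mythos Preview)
Your proof is correct and follows essentially the same route as the paper's: both use Merle's factorization (Theorem~\ref{decomposition}) to show that every irreducible factor $h$ of $\partial f/\partial y$ satisfies $i_0(x,h)=\ord(h)\not\equiv 0\ (\mo p)$ and $i_0(f,h)=m_h\overline{\beta_k}$ with $m_h<n_k\le n^*<p$, and then invoke the parametrization comparison with $\mu(f)$. The only difference is cosmetic: the paper cites Proposition~\ref{Teissier2} (with $l=x$) as a black box for the inequality and its equality criterion, while you reproduce that argument inline.
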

\noindent \begin{proof} Obviously $n_k\not\equiv 0$ \hbox{\rm(}$\mo p$\hbox{\rm)} for $k=1,\ldots,g$ and $n=n_1\cdots n_g \not\equiv 0$ \hbox{\rm(}$\mo p$\hbox{\rm)}. Let $h$ be an irreducible factor of  $\frac{\partial f}{\partial y}$. Then, by Corollary \ref{c:111}$(c)$ $i_0(h,x)=\ord(h)$.  By Theorem \ref{decomposition} {\em (b2)}  $\ord(h)=m_k\frac{n}{e_{k-1}},$ for an index $k\in\{1, \ldots, g\}$,  where $m_k\geq 1$ is an integer. 
Hence $m_k\frac{n}{e_{k-1}}=\ord(h)\leq \ord(h_k)=\frac{n}{e_{k-1}}(n_k-1)$ and $m_k\leq n_k-1<n_k<p$, which implies $m_k\not\equiv 0$ \hbox{\rm(}$\mo p$\hbox{\rm)} and $\ord(h) \not\equiv 0$ \hbox{\rm(}$\mo p$\hbox{\rm)}.
By Proposition \ref{Teissier2} we get $i_0\left(f,\frac{\partial f}{\partial y}\right)\leq \mu(f)+\ord(f)-1$.
 By Theorem \ref{decomposition} {\em (b1)} we have
 the equalities $i_0(f,h)=
\left(\frac{e_{k-1}\overline{\beta_k}}{n}\right)\ord(h)=m_k \overline{\beta_k}$ and we get $i_0(f,h)\not\equiv 0$ \hbox{\rm(}$\mo p$\hbox{\rm)} if and only if $\overline{\beta_k}\not\equiv 0$ \hbox{\rm(}$\mo p$\hbox{\rm)}, which proves the second part of Lemma \ref{mu}. 
\end{proof}

\medskip

\noindent {\bf Proof of Theorem \ref{main}}
Use Lemma \ref{mu} and Proposition \ref{PPP}. 
{\rule{0.5em}{0.5em}}

\begin{ejemplo}
Let  $f(x,y)=(y^2+x^3)^2+x^5y$. Then $f$ is irreducible and its semigroup is $\Gamma(f)=4\bN+6\bN+13\bN$. Here $e_0=4$, $e_1=2$, $e_2=1$ and $n_1=n_2=2$. Hence $n^*=2$. \\Let $p>n^*=2$. If $p=\ch \bK\neq 3, 13$ then $f$ is tame. On the other hand if $p=2$ then $\mu(f)=+\infty$ since $x$ is a common factor of $\frac{\partial f}{\partial y}$ and $\frac{\partial f}{\partial x}$. Hence $f$ is tame if and only if $p\neq 2,3,13$. Note that for any $l$ with $\ord(l)=1$ we have $i_0(f,l)\equiv 0$  \hbox{\rm(}$\mo 2$\hbox{\rm)}.
\end{ejemplo}

\begin{prop}
If $\Gamma(f)=\overline{\beta_0}\bN+\overline{\beta_1}\bN$ then $f$ is tame if and only if 
$\overline{\beta_0}\not\equiv 0$  \hbox{\rm(}$\mo p$\hbox{\rm)} and $\overline{\beta_1}\not\equiv 0$  \hbox{\rm(}$\mo p$\hbox{\rm)}.
\end{prop}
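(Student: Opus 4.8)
Since $g=1$ we have $\gcd(\overline\beta_0,\overline\beta_1)=e_1=1$; write $n:=\overline\beta_0=\ord f\geq 2$, so that $n$ and $\overline\beta_1$ are coprime, $\overline\beta_1>n\geq 2$, and, by the standing assumption of this section, $\ord(f(0,y))=\ord(f)=n$. The plan is to exhibit $f$, after an automorphism of $\bK[[x,y]]$, as a semi-quasihomogeneous singularity for the weight $\overrightarrow w:=(n,\overline\beta_1)$ and then to quote Theorem \ref{tt}. First I would replace $f$ by $\Phi(f)$ (and keep writing $f$ for $\Phi(f)$), where $\Phi$ is an automorphism of the form $y\mapsto y-\psi(x)$ with $\psi\in\bK[[x]]$, $\psi(0)=0$, chosen so that in addition $i_0(f,y)=\ord(f(x,0))=\overline\beta_1$. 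Such a $\Phi$ exists because for a branch with a single characteristic pair $\overline\beta_1$ is the maximal contact order $\max\{i_0(f,l):\ord(l)=1\}$, and it is attained by a parameter of this shape. Since the Milnor number is invariant under automorphisms of $\bK[[x,y]]$ (Section 1), and $c(f)$, $r(f)$, hence $\overline\mu(f)$, are preserved under such automorphisms, the replacement changes neither $\mu(f)$, nor $\overline\mu(f)$, nor the tameness of $f$, nor the equality $\ord(f(0,y))=n$.

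Next I would compute $\ini_{\overrightarrow w}(f)$. As $f$ is irreducible with $\ord(f(0,y))=\ord(f)=n$ and $\ord(f(x,0))=\overline\beta_1$, its Newton polygon is a single segment (otherwise $f$ would factor, by Hensel's Lemma as in Lemma \ref{Hensel}), namely the segment with endpoints $(0,n)$ and $(\overline\beta_1,0)$. Because $\gcd(n,\overline\beta_1)=1$, the only lattice points on that segment are its endpoints, and both are vertices of the Newton polygon; hence $\ord_{\overrightarrow w}(f)=n\overline\beta_1$ and $\ini_{\overrightarrow w}(f)=c_1y^{n}+c_2x^{\overline\beta_1}$ with $c_1c_2\neq 0$. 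A binomial with coprime exponents is irreducible, so $\ini_{\overrightarrow w}(f)$ has no multiple factors and Theorem \ref{tt} applies: $f$ is tame if and only if $f$ is semi-quasihomogeneous with respect to $\overrightarrow w$, i.e. if and only if the system $\frac{\partial}{\partial x}\ini_{\overrightarrow w}(f)=c_2\overline\beta_1\,x^{\overline\beta_1-1}=0$, $\frac{\partial}{\partial y}\ini_{\overrightarrow w}(f)=c_1n\,y^{n-1}=0$ has $(x,y)=(0,0)$ as its only solution.

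The remaining step is an elementary case check, using $n-1\geq 1$ and $\overline\beta_1-1\geq 1$. If $p\mid\overline\beta_0$, then $\frac{\partial}{\partial y}\ini_{\overrightarrow w}(f)\equiv 0$, so $(1,0)$ solves the system; if $p\mid\overline\beta_1$, then $\frac{\partial}{\partial x}\ini_{\overrightarrow w}(f)\equiv 0$, so $(0,1)$ solves it; in either case $f$ is not semi-quasihomogeneous with respect to $\overrightarrow w$, hence not tame. If $p\nmid\overline\beta_0\overline\beta_1$, then $c_1n\neq 0\neq c_2\overline\beta_1$, so the system forces $x^{\overline\beta_1-1}=y^{n-1}=0$, i.e. $x=y=0$; thus $f$ is semi-quasihomogeneous with respect to $\overrightarrow w$, hence tame. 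This is the asserted equivalence.

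The one point demanding care is the reduction in the first paragraph: that a suitable change $y\mapsto y-\psi(x)$ turns the Newton polygon into the segment with endpoints $(0,n)$ and $(\overline\beta_1,0)$, equivalently that $\overline\beta_1$ is the maximal contact order. When $\ch\bK=0$, and more generally when $p\nmid n$, this is the classical computation of the characteristic exponents from the Puiseux expansion of the branch $f=0$. In the inseparable case $p\mid n$ the Puiseux expansion is not available, but $\overline\beta_1$ is still the maximal contact order and the maximal-contact parameter still has the form $y-\psi(x)$ with $\psi\in\bK[[x]]$, so the argument is uniform in the characteristic; after the reduction everything rests on Theorem \ref{tt} together with the coprimality $\gcd(\overline\beta_0,\overline\beta_1)=1$.
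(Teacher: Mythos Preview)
Your proof is correct and follows essentially the same route as the paper: choose coordinates so that $\ini_{\overrightarrow w}(f)=c_1y^{\overline\beta_0}+c_2x^{\overline\beta_1}$ for the weight $\overrightarrow w=(\overline\beta_0,\overline\beta_1)$, and then invoke Theorem~\ref{tt}. The paper's version simply asserts the existence of such coordinates (referring to \cite[Example 2]{GB-P2016}), while you spell out the reduction via a maximal-contact parameter, the squarefreeness of the binomial initial form (from $\gcd(\overline\beta_0,\overline\beta_1)=1$), and the explicit check of the semi-quasihomogeneity condition; these are exactly the details behind the paper's one-line appeal to Theorem~\ref{tt}.
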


\noindent \begin{proof} 
Let $\overrightarrow w=(\overline{\beta_0},\overline{\beta_1})$. There exists a system of coordinates $x,y$ such that we can write $f=y^{\overline{\beta_0}}+x^{\overline{\beta_1}}+\hbox{\rm terms of weight greater than } \overline{\beta_0} \,\overline{\beta_1}$. The proposition follows from Theorem \ref{tt} (see also \cite[Example 2]{GB-P2016}). 
\end{proof}

\medskip

\noindent In \cite{Hefez}  the authors proved, without any restriction on $p=\ch \bK$, the following profound result:

\begin{teorema}{\rm (Hefez, Rodrigues, Salom\~ao \cite{Hefez}, \cite{Hefez2}).}
\label{H}
Let $\Gamma(f)=\overline{\beta_0}\bN+\cdots+\overline{\beta_g}\bN$. If $\overline \beta_k\not\equiv 0$ \hbox{\rm(}$\mo p$\hbox{\rm)} for $k=0,\ldots, g$ then $f$ is tame.
\end{teorema}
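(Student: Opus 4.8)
We may assume, after an automorphism of $\bK[[x,y]]$ (which changes neither $\mu$, $\delta$, $r$ nor the generators $\overline{\beta_k}$), that $\ord(f(0,y))=\ord(f)=n$. Since $n=\overline{\beta_0}\not\equiv 0$ ($\mo p$), the regular parameter $l=x$ satisfies $i_0(f,x)=n\not\equiv 0$ ($\mo p$) and ${\cal P}_x(f)=-\frac{\partial f}{\partial y}$. By Proposition \ref{PPP} it is therefore enough to establish Teissier's equality
\[
i_0\!\left(f,\frac{\partial f}{\partial y}\right)=\mu(f)+n-1.
\]
The plan is to read this off Merle's factorization $\frac{\partial f}{\partial y}=h_1\cdots h_g$ of Theorem \ref{decomposition}, exactly as in the proof of Lemma \ref{mu}: for an irreducible factor $h$ of $h_k$ one has $i_0(x,h)=\ord(h)$ by Corollary \ref{c:111}$(c)$, $\ord(h)=m_k\,\frac{n}{e_{k-1}}$ with $1\le m_k\le n_k-1$ by Theorem \ref{decomposition}$(a)$, $(b2)$, and $i_0(f,h)=m_k\,\overline{\beta_k}$ by Theorem \ref{decomposition}$(b1)$. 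When $p>n^*$ one gets $m_k<n_k<p$, hence $\ord(h)\not\equiv 0$ ($\mo p$), so Proposition \ref{Teissier2} applies and yields $i_0(f,\frac{\partial f}{\partial y})\le\mu(f)+n-1$ with equality if and only if $i_0(f,h)=m_k\overline{\beta_k}\not\equiv 0$ ($\mo p$) for all $h$, i.e.\ if and only if $\overline{\beta_k}\not\equiv 0$ ($\mo p$) for $k=0,\dots,g$; this is Theorem \ref{main}.

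The difficulty, and the reason this theorem lies beyond Theorem \ref{main}, is that without the hypothesis $p>n^*$ the integer $m_k$ (equivalently $\ord(h)=m_k\,n/e_{k-1}$) may be divisible by $p$ although $0<m_k<n_k$; then hypothesis $(ii)$ of Proposition \ref{Teissier2} fails for that factor and Teissier's Lemma in characteristic $p$ is simply not available. The last Example of Section \ref{Teissier} (the case $l_1=x$) shows that the Teissier \emph{equality} can persist even when the hypotheses of the lemma break down, so there is room to hope, but recovering the equality in general needs a genuinely different argument, and this is the hard part.

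The route I would take is a deformation argument. Suppose one can produce a one-parameter family $\{f_s\}_{s\in\bK}$ with $f_s$ analytically equivalent to $f$ for $s\neq 0$ and with $f_0$ a branch of the same semigroup $\Gamma(f)$ that is already known to be tame (for instance an iterated quasihomogeneous model, whose tameness is established separately, by induction on $g$ together with Theorem \ref{tt} or by direct computation). Then $\mu(f_s)=\mu(f)$ for $s\neq 0$, so upper semicontinuity of the Milnor number gives $\mu(f)\le\mu(f_0)$; since $\delta$ and $r$, hence $\overline{\mu}=2\delta-r+1$, are equisingularity invariants, $\overline{\mu}(f)=\overline{\mu}(f_0)=\mu(f_0)$ by Proposition \ref{pp:222}$(iv)$ applied to $f_0$; and the Melle--Wall inequality recalled in the Introduction gives $\mu(f)\ge\overline{\mu}(f)$. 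Combining, $\overline{\mu}(f)\le\mu(f)\le\mu(f_0)=\overline{\mu}(f)$, so $\mu(f)=\overline{\mu}(f)$ and $f$ is tame by Proposition \ref{pp:222}$(iv)$. (Here $\mu(f)<\infty$, which follows from Theorem \ref{decomposition} once $\overline{\beta_0}\not\equiv 0$ ($\mo p$).)

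The main obstacle is the construction of such a family when $g\ge 2$: a single weighted-homogeneous degeneration leaves the equisingularity stratum, so one must degenerate the branch level by level while keeping the semigroup fixed and while keeping $f_s$ analytically equivalent to $f$ away from the origin of the parameter line. This is precisely where the analytic classification of plane branches (Hefez--Hernandes) enters and is the technical heart of the Hefez--Rodrigues--Salom\~ao proof. An alternative that bypasses semicontinuity is to bring $f$, via an automorphism of $\bK[[x,y]]$, to an explicit short normal form and to compute $\dim_{\bK}\bK[[x,y]]/\left(\frac{\partial f}{\partial x},\frac{\partial f}{\partial y}\right)$ directly from a standard basis adapted to the Ap\'ery set of $\Gamma(f)$, verifying that it equals $c(f)=2\delta(f)$.
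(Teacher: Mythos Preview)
The paper does not prove Theorem~\ref{H}; it is quoted as a result of Hefez, Rodrigues and Salom\~ao from \cite{Hefez}, \cite{Hefez2}, introduced as a ``profound result'' and followed only by the remark that the converse remains open. There is therefore no proof in the paper to compare your proposal against.

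Your proposal is not a proof but an honest outline of a strategy, and you say so yourself. The first two paragraphs correctly recapitulate the argument behind Lemma~\ref{mu} and pinpoint where it breaks down without the hypothesis $p>n^*$: the integer $m_k$ with $1\le m_k\le n_k-1$ may be divisible by $p$, so hypothesis $(ii)$ of Proposition~\ref{Teissier2} can fail and Teissier's inequality is unavailable. The deformation idea in the third paragraph---degenerate within the equisingularity class to a tame model and sandwich $\mu(f)$ between $\overline{\mu}(f)$ (Melle--Wall) and $\mu(f_0)=\overline{\mu}(f_0)$ (semicontinuity)---is reasonable and is indeed in the spirit of the cited references. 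But, as you also concede, constructing such a family with a provably tame special fibre while keeping the semigroup fixed is the entire difficulty, and you do not carry it out; the appeal to the Hefez--Hernandes classification is a pointer, not an argument. The alternative you mention (normal form plus a standard-basis computation tied to the Ap\'ery set) is likewise only named. So your write-up correctly locates the obstacle and gestures at the known remedies, but it is a programme rather than a proof; since the paper itself supplies no proof either, there is nothing further to compare.
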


\noindent The question as to whether the converse of Theorem \ref{H} is true remains open.

\section{Appendix}

\noindent Let $\overrightarrow w=(n,m)\in (\bN_+)^2$  be a weight.

\begin{lema}
\label{A1}
Let $f,g\in \bK[[x,y]]$ be power series without constant term. Then 
\[
i_0(f,g)\geq \frac{\left(\ord_{\overrightarrow w}(f) \right) \left(ord_{\overrightarrow w}(g) \right)}{mn},\]
\noindent with equality if and only if the system of equations 
\[
\left\{\begin{array}{l}
\ini_{\overrightarrow w}(f)=0,\\
\\
\ini_{\overrightarrow w}(g)=0
\end{array}
\right.
\]
\noindent has the only solution $(x,y)=(0,0)$.
\end{lema}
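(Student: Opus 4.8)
The plan is to reduce the statement to a weighted-homogeneous (filtration) computation. First I would introduce the weighted filtration on $\bK[[x,y]]$ given by assigning $x$ the value $n$ and $y$ the value $m$, and pass to the associated graded ring $\mathrm{gr}\,\bK[[x,y]] \cong \bK[x,y]$ with the grading in which a monomial $x^\alpha y^\beta$ has degree $\alpha n + \beta m$. Under this identification the $\overrightarrow w$-initial forms $\ini_{\overrightarrow w}(f)$ and $\ini_{\overrightarrow w}(g)$ are the leading (lowest-degree) homogeneous components of $f$ and $g$. The key observation is that, because $\gcd$ considerations are not needed here, one can compute $i_0(f,g) = \dim_{\bK}\bK[[x,y]]/(f,g)$ by relating it to the analogous dimension for the initial forms: one always has $i_0(f,g) \geq i_0(\ini_{\overrightarrow w}(f),\ini_{\overrightarrow w}(g))$ in the graded ring, because forming the initial ideal can only drop the colength (a standard semicontinuity argument: $\dim \bK[[x,y]]/(f,g) \geq \dim \bK[x,y]/\ini_{\overrightarrow w}(f,g) \geq \dim \bK[x,y]/(\ini_{\overrightarrow w}(f),\ini_{\overrightarrow w}(g))$, where the last inequality is because $(\ini_{\overrightarrow w}(f),\ini_{\overrightarrow w}(g))$ is contained in the initial ideal of $(f,g)$).

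Next I would compute the right-hand side, namely $i_0(F,G)$ where $F = \ini_{\overrightarrow w}(f)$, $G = \ini_{\overrightarrow w}(g)$ are quasihomogeneous of degrees $a = \ord_{\overrightarrow w}(f)$, $b = \ord_{\overrightarrow w}(g)$. For a quasihomogeneous pair with no common zero other than the origin, a Bézout-type count in weighted projective space — or equivalently a direct resultant computation after the substitution $x = u^{n/d}$, $y = v^{m/d}$ with $d = \gcd(n,m)$, or simply the classical fact that two quasihomogeneous forms with an isolated common zero have intersection multiplicity $\frac{ab}{mn}$ — gives $i_0(F,G) = \frac{ab}{mn}$. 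When the two initial forms do share a nontrivial common factor, the intersection number of $F$ and $G$ is infinite, and so is $i_0(f,g)$ only if $f$ and $g$ literally share a factor; but even when they do not, the colength jumps strictly above $\frac{ab}{mn}$, which is exactly the content of the "equality iff" clause. So the remaining work is to show the equality $i_0(f,g) = \frac{ab}{mn}$ holds if and only if $\ini_{\overrightarrow w}(f)$ and $\ini_{\overrightarrow w}(g)$ have no common zero off the origin.

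For the equality case I would argue as follows. If the initial forms have only the trivial common zero, then $(\ini_{\overrightarrow w}(f),\ini_{\overrightarrow w}(g))$ is a system of parameters in $\bK[x,y]$ and in fact the ideal $(\ini_{\overrightarrow w}(f),\ini_{\overrightarrow w}(g))$ is already $\mathfrak{m}$-primary with colength $\frac{ab}{mn}$; a standard flatness/deformation argument (the weighted homogeneous filtration is a flat degeneration, cf. the Gröbner-basis viewpoint) then forces $i_0(f,g) = i_0(\ini_{\overrightarrow w}(f),\ini_{\overrightarrow w}(g)) = \frac{ab}{mn}$, since the colength cannot drop along a flat degeneration and we already have the reverse inequality. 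Conversely, if the initial forms share a nontrivial zero $(x_0,y_0)\neq(0,0)$, then after a weighted coordinate adjustment the common factor of the initial forms produces extra terms in the initial ideal of $(f,g)$, so $(\ini_{\overrightarrow w}(f),\ini_{\overrightarrow w}(g))$ is strictly smaller than the true initial ideal and the inequality $i_0(f,g) > \frac{ab}{mn}$ is strict. The main obstacle I anticipate is making the flat-degeneration / initial-ideal comparison fully rigorous in the power-series (rather than polynomial) setting and in arbitrary characteristic — in particular, justifying that forming $\ini_{\overrightarrow w}$ of the ideal $(f,g)$ behaves well and that the colength of the weighted-homogeneous ideal is exactly $\frac{ab}{mn}$; the cleanest route is probably to invoke the explicit resultant computation for quasihomogeneous forms, which is characteristic-free, rather than a general flatness theorem.
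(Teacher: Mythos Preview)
Your semicontinuity chain has the inequality backwards. From the containment $(\ini_{\overrightarrow w}(f),\ini_{\overrightarrow w}(g))\subseteq \ini_{\overrightarrow w}(f,g)$ you conclude that $\dim \bK[x,y]/\ini_{\overrightarrow w}(f,g)\geq \dim \bK[x,y]/(\ini_{\overrightarrow w}(f),\ini_{\overrightarrow w}(g))$, but a smaller ideal gives a \emph{larger} quotient, so the correct conclusion is the reverse inequality. The associated-graded argument therefore only yields $i_0(f,g)\leq i_0(\ini_{\overrightarrow w}(f),\ini_{\overrightarrow w}(g))$, which is vacuous exactly when the initial forms share a nontrivial zero (the right-hand side is then $+\infty$). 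A concrete counterexample to the inequality you claim: with $\overrightarrow w=(1,1)$, $f=y$, $g=y-x^2$, one has $i_0(f,g)=2$ while $i_0(\ini f,\ini g)=i_0(y,y)=+\infty$. So your outline does handle the equality case (when the initial forms are a regular sequence, the standard-basis argument gives $(\ini_{\overrightarrow w}(f),\ini_{\overrightarrow w}(g))=\ini_{\overrightarrow w}(f,g)$ and hence $i_0(f,g)=\frac{ab}{mn}$), but it never establishes the inequality $i_0(f,g)\geq \frac{ab}{mn}$ in general, nor its strictness when the initial forms share a zero; the ``extra terms in the initial ideal'' remark does not supply this.

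The paper bypasses all of this with a one-line substitution. Set $\tilde f(u,v)=f(u^n,v^m)$ and $\tilde g(u,v)=g(u^n,v^m)$. Since $\bK[[u,v]]$ is free of rank $nm$ over $\bK[[u^n,v^m]]$ one has $i_0(\tilde f,\tilde g)=nm\cdot i_0(f,g)$; moreover $\ord(\tilde f)=\ord_{\overrightarrow w}(f)$ and $\ini(\tilde f)=\ini_{\overrightarrow w}(f)(u^n,v^m)$. The weighted statement then reduces immediately to the classical unweighted fact $i_0(\tilde f,\tilde g)\geq \ord(\tilde f)\cdot\ord(\tilde g)$, with equality iff the ordinary initial forms meet only at the origin. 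This is characteristic-free and needs no filtration or flatness machinery.
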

\noindent \begin{proof} 
By a basic property of the intersection multiplicity (see for example \cite[Proposition 3.8 (v)]{Ploski2013}) we have that for any nonzero  power series $\tilde f, \tilde g$ 
\begin{equation}
\label{imin}
i_0(\tilde f, \tilde g)\geq \ord(\tilde f) \ord(\tilde g),
\end{equation}

\noindent  with equality if and only if the system of equations $\ini(\tilde f)=0$, $\ini(\tilde g)=0$ has the only solution $(0,0)$. Consider the  power series $\tilde f(u,v)=f(u^n,v^m)$ and $\tilde g(u,v)=g(u^n,v^m)$. Then  $i_0(\tilde f, \tilde g)=i_0(f,g)i_0(u^n,v^m)=i_0(f,g)nm$, 
$\ord(\tilde f)=\ord_{\overrightarrow w}(f)$, $\ord(\tilde g)=\ord_{\overrightarrow w}(g)$ and the lemma follows from (\ref{imin}).
 \end{proof}

\begin{lema}
\label{A2}
Let $f\in \bK[[x,y]]$ be a non-zero power series. Then 
\[
i_0\left(\frac{\partial f}{\partial x}, \frac{\partial f}{\partial y}\right)\geq \left(\frac{\ord_{\overrightarrow w}(f)}{n}-1\right)\left(\frac{\ord_{\overrightarrow w}(f)}{m}-1\right)
\]
with equality  if and only if $f$ is a semi-quasihomogeneous singularity with respect  to ${\overrightarrow w}$. 
 \end{lema}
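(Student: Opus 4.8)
The plan is to apply Lemma \ref{A1} to the two partial derivatives of $f$, using that $\mu(f)=i_0\!\left(\frac{\partial f}{\partial x},\frac{\partial f}{\partial y}\right)$ by the definition of the Milnor number. We may assume $f$ is a singularity, so that the partials have no constant term (if $\ord(f)\leq 1$ then $\mu(f)\in\{0,+\infty\}$ and the two sides are compared by inspection). Write $w=\ord_{\overrightarrow w}(f)$ and decompose $f$ into its $\overrightarrow w$-homogeneous components. The first ingredient I would record is the order estimate $\ord_{\overrightarrow w}\!\left(\frac{\partial f}{\partial x}\right)\geq w-n$, with equality if and only if $\partial\,\ini_{\overrightarrow w}(f)/\partial x\neq 0$, in which case moreover $\ini_{\overrightarrow w}\!\left(\frac{\partial f}{\partial x}\right)=\partial\,\ini_{\overrightarrow w}(f)/\partial x$. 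This is immediate: differentiating a $\overrightarrow w$-homogeneous form of $\overrightarrow w$-degree $k$ either annihilates it or yields one of $\overrightarrow w$-degree $k-n$, and only the component $\ini_{\overrightarrow w}(f)$ can contribute in $\overrightarrow w$-degree $w-n$. The symmetric statement holds for $\partial/\partial y$ with $m$ in place of $n$.

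Next I would clear the degenerate ranges. Since $f$ has no constant term we have $w\geq\min(n,m)$, so $\left(\frac wn-1\right)\left(\frac wm-1\right)\leq 0$ whenever $w\leq\max(n,m)$; in that range the inequality is trivial, and one checks that then $\ini_{\overrightarrow w}(f)$ is a single power $x^a$ (resp.\ $y^a$) with $a\geq 2$, so $f$ is not semi-quasihomogeneous while $\mu(f)\geq(\ord f-1)^2>0$, so the asserted equivalence holds because neither of its two conditions does. If $\partial f/\partial x=0$ or $\partial f/\partial y=0$, then $\mu(f)=+\infty$ and $f$ is again not semi-quasihomogeneous, so there is nothing to prove. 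There remains the main case $w>\max(n,m)$ with $\partial f/\partial x,\partial f/\partial y$ both nonzero; then $w-n>0$, $w-m>0$, the partials have no constant term, and Lemma \ref{A1} together with the order estimate gives
\[
\mu(f)=i_0\!\left(\frac{\partial f}{\partial x},\frac{\partial f}{\partial y}\right)\geq\frac{\ord_{\overrightarrow w}(\partial f/\partial x)\cdot\ord_{\overrightarrow w}(\partial f/\partial y)}{mn}\geq\frac{(w-n)(w-m)}{mn}=\left(\frac wn-1\right)\left(\frac wm-1\right).
\]

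For the equality statement in this last case I would argue as follows. An equality $\mu(f)=\left(\frac wn-1\right)\left(\frac wm-1\right)$ forces equalities throughout the display; equality in the second inequality forces $\ord_{\overrightarrow w}(\partial f/\partial x)=w-n$ and $\ord_{\overrightarrow w}(\partial f/\partial y)=w-m$, hence $\ini_{\overrightarrow w}(\partial f/\partial x)=\partial\,\ini_{\overrightarrow w}(f)/\partial x$ and $\ini_{\overrightarrow w}(\partial f/\partial y)=\partial\,\ini_{\overrightarrow w}(f)/\partial y$, both nonzero, while equality in Lemma \ref{A1} says that the only solution of $\partial\,\ini_{\overrightarrow w}(f)/\partial x=\partial\,\ini_{\overrightarrow w}(f)/\partial y=0$ is $(0,0)$, i.e.\ $f$ is semi-quasihomogeneous. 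Conversely, if $f$ is semi-quasihomogeneous then neither $\partial\,\ini_{\overrightarrow w}(f)/\partial x$ nor $\partial\,\ini_{\overrightarrow w}(f)/\partial y$ can vanish identically: if the first did, the defining system would reduce to $\partial\,\ini_{\overrightarrow w}(f)/\partial y=0$, and a nonzero $\overrightarrow w$-homogeneous form of positive $\overrightarrow w$-degree in two variables over the algebraically closed field $\bK$ always vanishes at some point $\neq(0,0)$ (factor out the largest power of a variable, or restrict to a line and use algebraic closure), contradicting semi-quasihomogeneity. Hence both partials of $\ini_{\overrightarrow w}(f)$ are nonzero, the order estimate is sharp on both sides, and the definition of semi-quasihomogeneity becomes precisely the equality clause of Lemma \ref{A1}, which gives the desired equality.

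I expect the only delicate point to be the behaviour of $\partial\,\ini_{\overrightarrow w}(f)/\partial x$ in characteristic $p>0$: it may vanish identically even though $f\neq 0$, and this is exactly the mechanism that turns the inequality into a strict one. Keeping careful track, case by case, of whether this form and its $y$-analogue vanish, and matching the outcome against semi-quasihomogeneity, is where the bookkeeping lies; the genuine geometry — that an isolated common zero of two plane forms over $\bK$ contributes the expected length to the intersection number — is entirely absorbed into Lemma \ref{A1}.
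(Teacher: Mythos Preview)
Your approach is essentially the same as the paper's: record the order estimate $\ord_{\overrightarrow w}(\partial f/\partial x)\geq \ord_{\overrightarrow w}(f)-n$ with equality iff $\partial\,\ini_{\overrightarrow w}(f)/\partial x\neq 0$ (and then $\ini_{\overrightarrow w}(\partial f/\partial x)=\partial\,\ini_{\overrightarrow w}(f)/\partial x$), feed the two partials into Lemma~\ref{A1}, and read off both the inequality and the equality characterization. The paper does exactly this, only more tersely; your added care in separating out the degenerate ranges $w\leq\max(n,m)$ and the vanishing-partial cases, and in arguing both directions of the equivalence explicitly, is welcome but not a different method.
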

 
 \noindent \begin{proof} The following two properties are useful:
 \begin{equation}
 \label{eeeqqq1}
 \ord_{\overrightarrow w} \left(\frac{\partial f}{\partial x}\right)\geq \ord_{\overrightarrow w}(f)-n \;\;\hbox {\rm with equality if and only if } \; \frac{\partial }{\partial x} \ini_{\overrightarrow w}(f)\neq 0,
 \end{equation}
 
  \begin{equation}
 \label{eeeqqq2}
\hbox{\rm if} \; \frac{\partial }{\partial x} \ini_{\overrightarrow w}(f)\neq 0 \;\hbox{\rm then } \;
 \ini_{\overrightarrow w} \left(\frac{\partial f}{\partial x}\right)=\frac{\partial }{\partial x} \ini_{\overrightarrow w}(f).
 \end{equation}
 
 \noindent By the first part of Lemma \ref{A1} and Property (\ref{eeeqqq1}) we get
 
 \begin{eqnarray*}
 i_0\left(\frac{\partial f}{\partial x},\frac{\partial f}{\partial y}\right)&\geq &
  \frac{\left(\ord_{\overrightarrow w}\left( \frac{\partial f}{\partial x}\right) \right) \left(ord_{\overrightarrow w} \left(\frac{\partial f}{\partial y} \right)\right)}{nm}\geq  \frac{\left(\ord_{\overrightarrow w}(f)-n \right) \left(ord_{\overrightarrow w}(f)-m \right)}{nm}\\
  &=&\left(\frac{\ord_{\overrightarrow w}(f)}{n}-1\right)\left(\frac{\ord_{\overrightarrow w}(f)}{m}-1\right).
 \end{eqnarray*}
 
 \noindent Using the second part of Lemma \ref{A1} and Properties (\ref{eeeqqq1}) and (\ref{eeeqqq2}) we check that $ i_0\left(\frac{\partial f}{\partial x},\frac{\partial f}{\partial y}\right)=
 \left(\frac{\ord_{\overrightarrow w}(f)}{n}-1\right)\left(\frac{\ord_{\overrightarrow w}(f)}{m}-1\right)$ if and only if $f$ is a semi-quasihomogeneous singularity with respect  to ${\overrightarrow w}$. 
 \end{proof}

 \begin{lema}{\rm (Hensel's Lemma \cite[Theorem 16.6]{Kunz}).}
 \label{Hensel}
 Suppose that $\ini_{\overrightarrow w}(f)=\psi_1\cdots \psi_s$ with pairwise coprime $\psi_i$. Then $f=g_1\cdots g_s\in \bK[[x,y]]$ with $\ini_{\overrightarrow w} (g_i)=\psi_i$ for $i=1,\ldots,s$.
 \end{lema}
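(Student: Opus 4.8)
The plan is to prove this as the Hensel-type factorization theorem for the ring $\bK[[x,y]]$ equipped with the $\overrightarrow w$-filtration $F_k=\{h\in\bK[[x,y]]:\ord_{\overrightarrow w}(h)\ge k\}$. Since $n,m\ge 1$ this filtration is cofinal with the $\mathfrak m$-adic one, so $\bK[[x,y]]$ is complete for it and its associated graded ring is the polynomial ring $S=\bK[x,y]$ with the weighted grading $\deg x=n$, $\deg y=m$. First I would reduce to two factors: since $\psi_1$ is coprime to $\psi_2\cdots\psi_s$, a factorization $f=g_1h$ with $\ini_{\overrightarrow w}(g_1)=\psi_1$ and $\ini_{\overrightarrow w}(h)=\psi_2\cdots\psi_s$ lets one iterate. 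So assume $s=2$, say $\ini_{\overrightarrow w}(f)=\psi_1\psi_2$ with $\psi_1,\psi_2$ coprime and $\overrightarrow w$-homogeneous of degrees $w_1,w_2$, $w_1+w_2=w:=\ord_{\overrightarrow w}(f)$. Writing $f=\sum_{k\ge w}f_k$ with $f_k\in S_k$ and $f_w=\psi_1\psi_2$, I would construct $g_1=\psi_1+\sum_{j\ge1}g_1^{(j)}$ and $g_2=\psi_2+\sum_{j\ge1}g_2^{(j)}$, $g_i^{(j)}\in S_{w_i+j}$, by successive approximation in the $\overrightarrow w$-grading: the degree-$(w+N)$ component of the equation $g_1g_2=f$ reads $\psi_1 g_2^{(N)}+\psi_2 g_1^{(N)}=q_N$, where $q_N:=f_{w+N}-\sum_{1\le j\le N-1}g_1^{(j)}g_2^{(N-j)}\in S_{w+N}$ is already determined by the earlier steps; so it is enough to solve this single linear equation for $g_1^{(N)}\in S_{w_1+N}$, $g_2^{(N)}\in S_{w_2+N}$ for each $N\ge1$.

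Everything then reduces to the key claim that $\psi_1 S_{k-w_1}+\psi_2 S_{k-w_2}=S_k$ for every $k\ge w+1$, equivalently $(S/(\psi_1,\psi_2))_k=0$ for $k\ge w+1$; this is where coprimeness is used. As $S$ is a UFD, $\psi_1$ is a non-zero-divisor, and since every irreducible factor of $\psi_1$ is coprime to $\psi_2$, the class of $\psi_2$ is a non-zero-divisor modulo $\psi_1$; hence $\psi_1,\psi_2$ is a homogeneous regular sequence. The Koszul complex then gives $H_{S/(\psi_1,\psi_2)}(t)=(1-t^{w_1})(1-t^{w_2})/((1-t^n)(1-t^m))$, which is a polynomial (the quotient being Artinian) of degree $w_1+w_2-n-m<w_1+w_2=w$; therefore $(S/(\psi_1,\psi_2))_k=0$ for all $k\ge w$, in particular for all $k\ge w+1$, and choosing arbitrary preimages supplies $g_1^{(N)},g_2^{(N)}$. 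Assembling, $g_1,g_2\in\bK[[x,y]]$ because $\psi_i+\sum_j g_i^{(j)}$ converges in the $\mathfrak m$-adic topology, $g_1g_2=f$ by construction, and $\ini_{\overrightarrow w}(g_i)=\psi_i$ since $\psi_1\psi_2\ne0$ means the leading forms multiply without cancellation. For general $s$ one iterates.

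The only genuine point is the key claim, and in fact the precise statement that $(S/(\psi_1,\psi_2))_k=0$ holds all the way down to $k=w$: the iteration must be fed a solvable equation at \emph{every} degree $w+1,w+2,\dots$, so a bound valid only for $k\gg0$ --- e.g.\ the crude $\mathfrak m$-primariness of $(\psi_1,\psi_2)$ obtained from the Nullstellensatz --- would not suffice, and it is exactly the complete-intersection Hilbert-series computation that yields the sharp vanishing. Since the whole argument is characteristic-free, the lemma, and with it Proposition~\ref{qh}, is valid in every characteristic; this is the content of \cite[Theorem 16.6]{Kunz}.
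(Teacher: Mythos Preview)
The paper does not supply a proof of this lemma: it is stated as Hensel's Lemma with a bare reference to \cite[Theorem~16.6]{Kunz}, so there is no ``paper's own proof'' to compare against. Your argument is a correct, self-contained proof of the weighted Hensel lifting, and indeed it is essentially the standard one: reduce to two coprime factors, lift degree by degree in the $\overrightarrow w$-filtration, and observe that the obstruction at each step lives in a graded piece of $S/(\psi_1,\psi_2)$ that vanishes. The one nontrivial ingredient---your ``key claim'' that $(S/(\psi_1,\psi_2))_k=0$ already for $k\ge w_1+w_2$---is handled cleanly by the Koszul/Hilbert-series computation, and your remark that mere $\mathfrak m$-primariness would not give a sharp enough bound is exactly the point.

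Two very minor comments. First, the regular-sequence step implicitly uses that neither $\psi_i$ is a unit; if some $\psi_i$ has $\overrightarrow w$-degree $0$ the factorization is trivial anyway, so you may simply say so. Second, the closing sentence ``$\ini_{\overrightarrow w}(g_i)=\psi_i$ since $\psi_1\psi_2\neq 0$ means the leading forms multiply without cancellation'' is slightly off-target: $\ini_{\overrightarrow w}(g_i)=\psi_i$ holds by construction because $g_i=\psi_i+(\text{higher }\overrightarrow w\text{-order})$ and $\psi_i\neq 0$; the non-cancellation remark is irrelevant once $g_1g_2=f$ has been achieved exactly. Neither point affects the validity of the proof.
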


\medskip
\noindent
{\small Evelia Rosa Garc\'{\i}a Barroso\\
Departamento de Matem\'aticas, Estad\'{\i}stica e I.O. \\
Secci\'on de Matem\'aticas, Universidad de La Laguna\\
Apartado de Correos 456\\
38200 La Laguna, Tenerife, Espa\~na\\
e-mail: ergarcia@ull.es}

\medskip

\noindent {\small Arkadiusz P\l oski\\
Department of Mathematics and Physics\\
Kielce University of Technology\\
Al. 1000 L PP7\\
25-314 Kielce, Poland\\
e-mail: matap@tu.kielce.pl}


\begin{thebibliography}{99}


\bibitem{BGreuel} Boubakri, Y., Greuel G-M, Markwig, T.: {\em Invariants of hypersurface singularities in  positive characteristic.} Rev. Mat. Complut. {\bf 25}, 
61-85, (2010)


\bibitem{Campillo-libro} Campillo, A.: {\em Algebroid curves in positive
characteristic.}  Lecture Notes in Mathematics, 813. Springer Verlag, Berlin,
(1980)

\bibitem{Campillo} Campillo, A.: {\em Hamburger-Noether expansions over rings.}  Trans. Amer. Math. Soc. {\bf 279}, 377-388 (1983)

\bibitem{Cassou-P} Cassou-Nogu\`es, P, P\l oski, A.:  {\em Invariants of plane curve singularities and Newton diagrams.} Univ. Iag. Acta Mathematica {\bf 49}, 9-34 (2011)

\bibitem{Deligne}Deligne, P.: {\em La formule de Milnor.} Sem. Geom. alg\'ebrique, Bois-Marie 1967-1969, SGA 7 II, Lect. Notes Math., 340, Expos\'e XVI, 197-211 (1973).

\bibitem {GB-P2015}Garc\'{\i}a Barroso, E.; P\l oski, A.: {\em An approach to plane algebroid branches.} Rev. Mat. Complut. {\bf 28}, 227-252 (2015)

\bibitem {GB-P2016} Garc\'{\i}a Barroso, E.; P\l oski, A.:  {\em The Milnor number of plane irreducible singularities in positive characteristic.} Bulletin of the London Mathematical Society {\bf 48}(1),  94-98 (2016). doi: 10.1112/blms/bdv095

\bibitem {G} Greuel, G-M.: {\em Der Gauss-Manin-Zusammenhang isolierter Singularit\" aten von vollst\" andigen Durchschnitten.} Math. Ann. {\bf 214}, 235-266 (1975)

\bibitem {G-L-S} Greuel, G-M, Lossen, C., Shustin, E.: {\em Introduction to Singularities and Deformations.} Springer, Berlin (2007)

\bibitem {G-N} Greuel, G-M, Nguyen, H.D.: {\em Some remarks on the planar Kouchnirenko's theorem.} Rev. Mat. Complut {\bf 25}, 557-579 (2012)

\bibitem {Hefez} Hefez, A, Rodrigues, J.H.O., Salom\~ao, R.: {\em The Milnor number of a hypersurface singularity in arbitrary characteristic. } arXiv:1507.03179v1 (2015)

\bibitem {Hefez2} Hefez, A, Rodrigues, J.H.O., Salom\~ao, R.: {\em The Milnor number of plane branches with tame semigroup of values.}  arXiv:1708.07412 (2017)

\bibitem {Kunz} Kunz, E.:  {\em Introduction to Plane Algebraic Curves.} Translated from the 1991 German edition by Richard G. Belshoff. Birkh\" auser, Boston (2005)

\bibitem {Le} L\^e, D.T.: {\em Calculation of Milnor number of isolated singularity of complete intersection.} Funct. Anal. Appl. {\bf 8}, 127-131 (1974)
 
\bibitem {Merle} Merle, M.: {\em Invariants polaires des courbes
planes. } Invent. Math. \textbf{41}(2), 103-111 (1977)

\bibitem {Melle} Melle-Hern\'andez, A., Wall C.T.C.: {\em Pencils of cuves on smooth surfaces.} Proc. Lond. Math. Soc., {\bf 83}(2), 257-278 (2001)

\bibitem {Milnor} Milnor, J. W.: {\em Singular points of complex hypersurfaces.} Princeton University Press (1968)

\bibitem {Nguyen} Nguyen, H.D.: {\em Invariants of plane curve singularities, and Pl\"ucker formulas in positive characteristic.} Annales Inst. Fourier {\bf 66}, 2047-2066 (2016)

\bibitem {Ploski2013}  P\l oski, A.: {\em Introduction to the local theory of plane algebraic curves.} In: Krasi\'nski, T., Spodzieja, S. (eds.) Analytic and Algebraic Geometry, pp. 115-134. \L \'od\'z University Press, \L \'od\'z (2013)

\bibitem {Ploski2014}  P\l oski, A.: {\em Plane algebroid  branches after R. Ap\'ery.} \L \'od\'z 2014, 35-44. Accesible in  konfrogi. math.uni.lodz.pl

\bibitem {Risler} Risler, J.J.: {\em Sur l'id\'eal jacobien d'une courbe plane.} Bull. Soc. Math. Fr., {\bf 99}(4),  305-311 (1971)


\bibitem {Teissier} Teissier, B.: {\em Cycles \'evanescents, section planes et condition de Whitney.} Ast\'erisque, {\bf 7-8}, 285-362 (1973)

\end{thebibliography}
\end{document}